\theoremstyle{plain}
\newtheorem{theorem}{Theorem}[section]
\newtheorem{lemma}[theorem]{Lemma}
\newtheorem{proposition}[theorem]{Proposition}
\newtheorem{corollary}[theorem]{Corollary}
\theoremstyle{definition}
\newtheorem{definition}[theorem]{Definition}
\newtheorem{example}[theorem]{Example}
\newtheorem{remark}[theorem]{Remark}
\newcommand\bt{\begin{theorem}}
\newcommand\et{\end{theorem}}
\newcommand\bl{\begin{lemma}}
\newcommand\el{\end{lemma}}
\newcommand\bp{\begin{proposition}}
\newcommand\ep{\end{proposition}}
\newcommand\bc{\begin{corollary}}
\newcommand\ec{\end{corollary}}
\newcommand\bd{\begin{definition}}
\newcommand\ed{\end{definition}}
\newcommand\br{\begin{remark}}
\newcommand\er{\end{remark}}
\newcommand\bex{\begin{example}}
\newcommand\eex{\end{example}}
\newcommand\bess{\begin{eqnarray*}}
\newcommand\eess{\end{eqnarray*}}
\title{Coperfectly Hopfian Groups and Shape Fibrator's Properties\footnote{This research did not receive any specific grant from funding agencies in the public, commercial, or
not-for-profit sectors.}}
\author{Violeta Vasilevska\\
{\it Department of Mathematics}\\
{\it  Utah Valley University}\\
{ \it 800 West University Parkway}\\
{\it  Orem, UT 84058, USA}\\
{\it Phone}: (801) 863-8649; {\it Fax}: (801) 863-6254\\
 {\it  e-mail: Violeta.Vasilevska@uvu.edu}}
\begin{document}
\maketitle
\begin{abstract}

This paper provides further investigation of the concept of shape m$_{\rm simpl}$-fibrators (previously introduced by the author). The main results identify shape m$_{\rm simpl}$-fibrators among direct products of Hopfian manifolds. First it is established that every closed orientable manifold homotopically determined by $\pi_1$ with coperfectly Hopfian group (a new class of Hopfian groups that are introduced here) is a shape m$_{\rm simpl}$o-fibrator if it is a codimension-2 fibrator (Theorem \ref{Fib}). The main result (Theorem \ref{Dir}) states that the direct product of two closed orientable manifolds (of different dimension) homotopically determined by $\pi_1$ and  with coperfectly Hopfian fundamental groups (one normally incommensurable with the other one) is a shape m$_{\rm simpl}$o-fibrator, if it is a Hopfian manifold and a codimension-2 fibrator.

\end{abstract}
\vspace{0.5cm}

 {\it Keywords}: Approximate fibration; Shape ${\rm
m_{simpl}o}$-fibrator; Coperfectly Hopfian group;
Manifold homotopicaly determined by $\pi_1$

\vspace{0.5cm}

 {\it AMS (MOS) Subj. Class.}: Primary 57N15; 57M07,
Secondary 57N25; 54B15.
\newpage

\section{\bf Introduction}

This paper continues an investigation of the proper mappings from $(n+k)$-manifolds onto triangulated manifolds that have closed manifolds as point pre-images in the PL setting introduced in \cite{VV1}.

The approximate fibrations, introduced by Coram and Duvall \cite{Coram 1, Coram 2}, are proper mappings that satisfy an approximate version
of the homotopy lifting property - the defining property of the more familiar class of fibrations. They form an important class of mappings mostly because of their nice properties. Among them the most useful property is the existence of an exact sequence involving  the homotopy groups of domain, target, and shape-theoretical homotopy groups of any point inverse of $p$. Note that these properties of an approximate fibration reduce to the usual properties of Hurewicz fibration when working with a PL approximate fibration, because the fibers are ANRs, so the $i^{\rm th}$ shape homotopy groups are isomorphic to $i^{\rm th}$ homotopy groups.

Sometimes a proper map defined on an arbitrary manifold of a specific dimension can be recognized as an approximate fibration due to having point inverses all of a certain homotopy type (or shape). Hence, in order to recognize manifolds that can force a proper map to be an approximate fibrations (when they appear as point pre-images of the map), Daverman introduced the concept of {\it codimension-$k$ (orientable) fibrator} \cite{D.Dec} and later the concept of {\it PL (orientable) fibrator} \cite{D.PL}. In \cite{VV1} the author introduced the concept of {\it codimension-$k$ shape $m_{simpl}(o)$-fibrator} (and more generally the concept of {\it shape $m_{simpl}(o)$-fibrator}) as PL fibrators in a slightly different PL setting than the one used by Daverman in \cite{D.PL}, and provided examples of manifolds that are shape m$_{\rm simpl}$o-fibrators. In addition, in \cite{VV2} the author provided examples of manifolds that are codimension-($k+1$) shape m$_{\rm simpl}$o-fibrators ($k \ge 2$).

The following is the main question that we address in this paper: {\it Which direct products of Hopfian manifolds are shape m$_{simpl}$o-fibrators?}
The question of whether the collection of codimension-$k$ PL (or shape m$_{\rm simpl}$) fibrators is closed under taking Cartesian product remains unsolved, but seems not likely (because of the examples presented in \cite{D.cod2}). Some partial answers to this question for codimension-$k$ PL fibrators (as well as PL fibrators) have been given in \cite{Daverman-Im-Kim, Yo1, IKW, IKW1, Im-Kim3, JK}.

In this paper, we provide examples of shape m$_{\rm simpl}$o-fibrators among direct products of Hopfian manifolds. Note that analysis of fibrator properties applies mostly to Hopfian manifolds with Hopfian fundamental groups,
hence in search for shape m$_{\rm simpl}$o-fibrators among products of Hopfian manifolds, first we need to look for a particular type of Hopfian groups (the ones that are closed under taking Cartesian products).

Therefore, this paper has two parts. The first part, Sections \ref{PHG} and \ref{PPHG}, introduce and discuss two group properties (coperfectly Hopfian group and normal incommensurability of groups) that are needed to provide closure under taking direct product of Hopfian groups (see Theorem \ref{PH} below). Section \ref{PHG} provides examples of coperfectly Hopfian groups among finite and infinite groups, including the fundamental groups of closed orientable surfaces with genus $g >1$ (see Theorem \ref{PH-S} below). In addition, this section lists information about normal incommensurability of groups (e.g., finitely generated groups with $h$ generators are normally incommensurable with the fundamental group of closed orientable surfaces with genus $g >1$, if $h < 2g$ [see Theorem \ref{fg} below]). Section \ref{PPHG} discusses conditions under which free products (see Corollary \ref{free product 3} below) and direct products (see Theorem \ref{PH} below) of coperfectly Hopfian groups are coperfectly Hopfian.

 The second part of the paper, Sections \ref{SF} and \ref{PSF}, provide applications to shape m$_{\rm simpl}$o-fibrators. Namely, Section \ref{SF} delivers examples of shape m$_{\rm simpl}$o-fibrators among codimenion-2 fibrators who are closed orientable manifolds homotopically determined by $\pi_1$ with coperfectly Hopfian fundamental groups (see Theorem \ref{Fib} below). Section \ref{PSF} contains the main results that provide detection of shape m$_{\rm simpl}$o-fibrators among direct products of Hopfian manifolds (see Theorems \ref{Dir} and \ref{C-Dir} below).

\section{\bf Definitions and notations}

Throughout the paper, symbols $\cong$ and $\chi$ will denote isomorphism and Euler characteristic respectively, and homology and
cohomology groups will be computed with integer coefficients. \cite{R-S} contains the terminology and definitions used for the material on piecewise-linear topology. Space means topological space and
maps are continuous functions. We assume that all spaces are locally
compact ANR. A {\it manifold} is assumed to be connected, metric,
and boundaryless. A manifold $M$ is {\it aspherical} if $\pi_i(M)=0$ for all $i>1$. If
$M$ is a manifold then $M^n$ will denote a manifold of dimension
represented by the superscript.

A {\it generalized $k$-manifold} is a finite dimensional, locally contractible metric space $X$, such that $H_*(X,X \backslash \{x\}) \cong H_*(\mathbb{R}^k, \mathbb{R}^k \backslash \{0\})$ for all $x \in X$. A {\it simplicial homotopy $k$-manifold} is a triangulated polyhedron $K$ in which the link of each $i$-simplex has the homotopy type of the $(k-i-1)$-sphere. Note that simplicial homotopy manifolds are genuine topological manifolds, unlike the polyhedral generalized manifolds, in which vertices possibly fail to have a Euclidean neighborhood. If $B$ is a simplicial complex, then $B^{(j)}$ denotes the
$j$-skeleton of $B$ and $B^j$ denotes the $j$-th derived subdivision
of $B$.

 A map $f:N \to
N'$ between closed orientable $n$-manifolds is said to have
(absolute) {\it degree} $d$ if there are choices of generators
$\gamma \in H_n(N) \cong \mathbb{Z}$, $\gamma' \in H_n(N') \cong
\mathbb{Z}$, such that $f_*(\gamma)=d\gamma'$, where $d \ge 0$ is an
integer. The {\it Hopfian  manifold}
\cite{D.MPL} is a closed orientable manifold such that every
degree one self-map which induces a $\pi_1$-isomorphism is a
homotopy equivalence. Examples of Hopfian manifolds include: every
closed orientable $n$-manifold that (1) is simply connected; or (2)
has a finite fundamental group; or (3) has a Hopfian fundamental group
and $n \le 4$ \cite{Ha}.
A manifold $N$ is {\it homotopically determined by $\pi_1$}
\cite{Daverman-Kim} if every self map $f:N\to N$ that induces a
$\pi_1$-isomorphism is a homotopy equivalence. Aspherical manifolds
are common examples of manifolds determined by $\pi_1$. No closed
$n$-manifold, $n>1$, with free fundamental group is
homotopically determined by $\pi_1$. Additional examples are
presented in \cite{Daverman-Kim}.

A proper surjective map $p:E \to B$ between locally compact ANR's
is an {\it approximate fibration} if $p$ satisfies the following
approximate homotopy lifting property: for an arbitrary space $X$,
and given a cover $\mathfrak{U}$ of $B$ and maps $g:X \to E$ and $H:
X \times [0,1] \to B$ such that $pg=H_0$, there exists a map
$\widetilde{H}: X \times [0,1] \to E$ such that $\widetilde{H}_0=g$
and $p\widetilde{H}$ and $H$ are $\mathfrak{U}$-close (i.e., for each
$z \in X \times [0,1]$, there exists $U_z \in \mathfrak{U}$ such
that $\{H(z), \, p\widetilde{H}(z)\} \subset U_z$).

Following P. Hall, we call a group $G$ {\it residually
finite} if to each non-identity element $g$ in $G$, there corresponds a
homomorphism taking $G$ onto a finite group and $g$ onto a non-identity
element of this image group. In other words, $G$ is a residually
finite group if every non-identity element of $G$ is mapped
nontrivially in some finite quotient group of $G$.

Recall that a group $G$ is {\it Hopfian} (after Heinz Hopf,
1894-1971) if every  epimorphism $\varphi:G \to G$ is an
automorphism. In other words, $G$ is Hopfian if it is not isomorphic
to a proper factor of itself. A group $G$ is {\it
hyper-Hopfian} \cite {D.HG} if every homomorphism $\varphi:G \to G$ with $\varphi
(G) \vartriangleleft G$ and $G/\varphi (G)$ cyclic is necessarily an
automorphism. A group $G$ is  {\it ultra-Hopfian} \cite{VV1} if  every nontrivial homomorphism $\varphi:G \to G$ with $\varphi(G) \unlhd G$ is an automorphism.

\section{\bf Coperfectly Hopfian Groups and Group Incommensurability}\label{PHG}

In this section we introduce and discuss two new group theoretical properties.

A group $G$ is called {\it coperfectly Hopfian} if  every homomorphism $\varphi:G \to G$ with $\varphi(G) \unlhd G$ and $G/{\varphi(G)}$ perfect, is an automorphism.
First note that coperfectly Hopfian groups are Hopfian groups by
definition and that no perfect group can be coperfectly Hopfian. Also, all ultra-Hopfian groups that are not perfect are coperfectly Hopfian.
Furthermore, all non-perfect simple groups are coperfectly Hopfian (since they do not have a proper normal subgroup isomorphic to a factor group of itself). Note that the simple groups $\mathbb{Z}_p$, $p$-prime, are examples of coperfectly Hopfian and
ultra-Hopfian groups that are not hyper-Hopfian groups.

\begin{theorem} \label{prop1} All Hopfian solvable groups are coperfectly Hopfian.
\end{theorem}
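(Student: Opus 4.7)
The plan is to exploit the tension between being solvable and having a perfect quotient. The only group that is both solvable and perfect is the trivial group, so the perfectly-Hopfian hypothesis on a solvable $G$ will collapse to just a surjectivity requirement on $\varphi$, at which point the Hopfian hypothesis finishes the argument.

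First I would recall (or prove in one line) the following standard fact: if $H$ is solvable then its derived series $H \supseteq H' \supseteq H'' \supseteq \cdots$ terminates at $\{e\}$; if in addition $H$ is perfect, then $H = H'$, so the derived series is constant at $H$, forcing $H = \{e\}$.

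Next, let $\varphi: G \to G$ be any homomorphism such that $\varphi(G)$ is normal in $G$ and $G/\varphi(G)$ is perfect. Since $G$ is solvable, every quotient of $G$ is solvable (as taking quotients commutes with the derived series in an obvious way), so $G/\varphi(G)$ is solvable. Combined with being perfect, the preliminary fact forces $G/\varphi(G)$ to be trivial, i.e., $\varphi(G) = G$. Thus $\varphi$ is an epimorphism of $G$ onto itself, and now the Hopfian hypothesis on $G$ yields that $\varphi$ is an automorphism. Therefore $G$ is perfectly-Hopfian.

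There is really no hard step here; the entire proof is a two-line observation once one notices the soluble-plus-perfect-implies-trivial lemma. The only thing worth double-checking is that the definition of perfectly-Hopfian is satisfied vacuously for the non-trivial cases as well — but this is automatic since we never needed $\varphi(G)$ to be a proper subgroup: the hypothesis that $G/\varphi(G)$ is perfect is what drives the argument, and it forces the quotient to be trivial irrespective of whether we entertained proper-subgroup cases.
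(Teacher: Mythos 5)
Your proposal is correct and follows essentially the same route as the paper: solvability of $G$ passes to the quotient $G/\varphi(G)$, a nontrivial solvable group cannot be perfect, so $\varphi$ is surjective and the Hopfian hypothesis finishes the argument. The only cosmetic difference is that you note directly that quotients of solvable groups are solvable, whereas the paper routes this through the solvability of $\varphi(G)$; the content is identical.
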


    \begin{proof} Let $G$ be a Hopfian solvable group and let $\varphi:G \to G$ be such that $\varphi(G)$ is a normal subgroup of $G$ and $G/{\varphi(G)}$ is perfect. Since $\varphi(G)$  is solvable (as a homomorphic image of the solvable group $G$), it follows that $G/{\varphi(G)}$ is a solvable group too. No nontrivial solvable group is perfect, hence $G/{\varphi(G)}$ must be trivial. Therefore $\varphi$ is surjective and the Hopfian property of $G$ implies that $\varphi$ is an isomorphism.
    \end{proof}

\begin{corollary} \label{Ab} All finitely generated Abelian groups are coperfectly Hopfian groups.
\end{corollary}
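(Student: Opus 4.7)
The plan is to derive this as an immediate consequence of Theorem \ref{prop1}, so the work reduces to verifying that every finitely generated Abelian group is both Hopfian and solvable. Solvability is trivial: any Abelian group $G$ satisfies $[G,G]=\{e\}$, so its derived series terminates at the first step.

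For the Hopfian property, I would invoke the structure theorem for finitely generated Abelian groups: any such $G$ decomposes as $\mathbb{Z}^r \oplus T$ where $T$ is a finite Abelian group. From here one can either cite Malcev's classical theorem (finitely generated residually finite groups are Hopfian, and finitely generated Abelian groups are manifestly residually finite) or give a direct argument. The direct argument is short: given a surjection $\varphi : \mathbb{Z}^r \oplus T \to \mathbb{Z}^r \oplus T$, the torsion subgroup is characteristic, so $\varphi$ restricts to a surjective endomorphism of the finite group $T$ (hence an automorphism) and induces a surjective endomorphism of the free quotient $\mathbb{Z}^r$, which is an automorphism since surjective endomorphisms of $\mathbb{Z}^r$ have determinant $\pm 1$. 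A short five-lemma chase then shows $\varphi$ itself is injective.

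With Hopfian and solvable established, Theorem \ref{prop1} applies directly and yields the perfectly-Hopfian conclusion. There is no genuine obstacle here; the only subtlety is being careful that we need \emph{finitely generated} Abelian groups rather than arbitrary Abelian groups, since $\mathbb{Q}$ or $\mathbb{Z}[1/p]/\mathbb{Z}$ are examples of non-Hopfian Abelian groups, which would fail the hypothesis of Theorem \ref{prop1}.
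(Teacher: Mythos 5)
Your proof is correct and follows the same route as the paper: Corollary \ref{Ab} is deduced directly from Theorem \ref{prop1} by noting that finitely generated Abelian groups are solvable and Hopfian (the paper leaves the Hopfian verification implicit, while you supply it via the structure theorem or Mal'cev's theorem, which is fine). One small slip in your closing aside: $\mathbb{Q}$ is in fact Hopfian (any surjective endomorphism is multiplication by a nonzero rational, hence injective) --- indeed the paper later records that $\mathbb{Q}$ is perfectly-Hopfian --- so the Pr\"ufer group $\mathbb{Z}[1/p]/\mathbb{Z}$ is the right example of a non-Hopfian Abelian group, not $\mathbb{Q}$.
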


Theorem \ref{prop1} also implies that all groups of order less than 60 and finite groups of odd order are coperfectly Hopfian, since they are Hopfian solvable groups.

Recall that a {\it polycyclic group} is both a solvable group and a Noetherian group.

\begin{corollary} Every polycyclic group is coperfectly Hopfian.
\end{corollary}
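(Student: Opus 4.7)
The plan is to reduce the statement to Theorem \ref{prop1}, which already gives perfect-Hopfianness for any group that is simultaneously Hopfian and solvable. So it suffices to check that every polycyclic group satisfies both of these hypotheses.

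Solvability is immediate from the definition of polycyclic recalled just before the statement (polycyclic groups are solvable groups with all subgroups finitely generated; equivalently, they admit a subnormal series with cyclic factors, which is in particular an abelian series). This leaves the Hopfian property as the real content of the corollary.

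For Hopficity I would invoke the classical theorem of Mal'cev: every finitely generated residually finite group is Hopfian. A polycyclic group is finitely generated (one can take one generator from each cyclic factor in a polycyclic series), so it only remains to note that polycyclic groups are residually finite. This is Hirsch's theorem, which is standard and may be cited directly. With both ingredients in hand, Theorem \ref{prop1} applies and the conclusion follows.

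The main (and only) obstacle is making sure the Hopfian property is cleanly justified without reproving Hirsch's and Mal'cev's theorems; in practice this is settled by citing them, so the proof is essentially a one-line deduction from Theorem \ref{prop1} once the definition of polycyclic is unpacked.
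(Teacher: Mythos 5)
Your proposal is correct and follows exactly the paper's route: reduce to Theorem \ref{prop1} by noting polycyclic groups are solvable by definition, and Hopfian because they are finitely generated and residually finite (Hirsch) and hence Hopfian by Mal'cev. No differences worth noting.
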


    \begin{proof} This follows from Theorem \ref{prop1}, since all polycyclic groups are finitely generated residually finite groups by \cite[Theorem~ 3]{HIV}, hence Hopfian \cite{Mal}.
    \end{proof}

\begin{corollary}\label{Nil}  Every finitely generated nilpotent group is coperfectly Hopfian.
\end{corollary}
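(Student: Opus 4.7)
The plan is to reduce this corollary immediately to Theorem \ref{prop1} (or equivalently to the preceding polycyclic corollary), since the nontrivial work has already been done there. What remains is to verify that every finitely generated nilpotent group falls into the class of Hopfian solvable groups to which Theorem \ref{prop1} applies.

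First I would observe that any nilpotent group is solvable, because the lower central series is a descending normal series with abelian quotients, which in particular shows the group is solvable. Hence a finitely generated nilpotent group $G$ is a finitely generated solvable group. Second, I would argue that $G$ is Hopfian. The cleanest way is to note that every finitely generated nilpotent group is polycyclic (each factor in the lower central series is a finitely generated abelian group, and finite extensions of polycyclic groups are polycyclic), and then invoke the previous corollary directly. Alternatively, one can cite that finitely generated nilpotent groups are residually finite (a classical result of Hirsch), and then finitely generated residually finite groups are Hopfian by Mal'cev's theorem \cite{Mal}, exactly as used one corollary above.

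With both ingredients in hand, $G$ is a Hopfian solvable group, so Theorem \ref{prop1} applies and $G$ is perfectly-Hopfian. There is essentially no obstacle: the corollary is a one-line consequence of Theorem \ref{prop1} together with standard facts from group theory, and the only choice to make is whether to phrase the reduction through the polycyclic corollary or directly through Hopficity plus solvability. I would favor the latter for brevity, keeping the proof to a single sentence that cites Theorem \ref{prop1} and records the two properties.
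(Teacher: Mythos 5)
Your proposal is correct and matches the paper's (implicit) reasoning: the corollary is stated without a separate proof precisely because finitely generated nilpotent groups are polycyclic (equivalently, Hopfian and solvable), so it follows at once from Theorem \ref{prop1} via the preceding corollary, exactly as you argue. Either of your two phrasings of the reduction is fine.
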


In particular, all finite $p$-groups are coperfectly Hopfian, since they are nilpotent.

By Theorem \ref{prop1} and Burnside's Theorem, the following result is easy seen.

\begin{corollary} \label{pq} Every group of order $p^nq^m$,
where $p, \, q$ are primes and $n, \, m$ are non-negative integers, is coperfectly Hopfian.
\end{corollary}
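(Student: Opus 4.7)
The plan is to invoke Theorem \ref{prop1} directly, reducing the statement to verifying that every group $G$ of order $p^n q^m$ is both Hopfian and solvable. First I would observe that $G$ is finite, and every finite group is Hopfian because any surjective endomorphism of a finite set is automatically injective. Hence the Hopfian hypothesis of Theorem \ref{prop1} is free.

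Next I would appeal to Burnside's classical $p^n q^m$ theorem, which asserts that every finite group whose order has at most two distinct prime divisors is solvable. This supplies the solvability hypothesis of Theorem \ref{prop1}. Combining these two observations, $G$ is a Hopfian, solvable group, and so Theorem \ref{prop1} immediately yields that $G$ is perfectly-Hopfian.

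There is no real obstacle here beyond citing Burnside's theorem; the corollary is a two-line consequence of Theorem \ref{prop1} once the well-known facts that finite groups are Hopfian and that $p^n q^m$-groups are solvable are in place.
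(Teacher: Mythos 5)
Your proof is correct and follows exactly the paper's argument: solvability via Burnside's $p^nq^m$ theorem, Hopficity from finiteness, and then Theorem \ref{prop1}. Nothing further is needed.
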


Dihedral groups $D_{2n+1}=\left\langle x,y \, \arrowvert \,
x^2=y^{2n+1}=1, \, x^{-1}yx=y^{-1}\right\rangle$ of order $2(2n+1)$,
where $2n+1$ is a prime,  are coperfectly Hopfian by Corollary
\ref{pq}. They are also hyper-Hopfian (see \cite[Section~ 4]{D.HG}) and ultra-Hopfian by \cite[Proposition~ 2.1]{VV1} groups as well. Furthermore, $D_{2^{n+1}}=\left\langle x,y \, \arrowvert \, x^2=y^{2^{n+1}}=1,
\, x^{-1}yx=y^{-1}\right\rangle$ are 2-groups, so coperfectly Hopfian by Corollary \ref{Nil}. Note that $D_{2^{n+1}}$ are not ultra-Hopfian (see \cite[Section~ 2]{VV1}). The quaternionic group $Q=\left\langle c, d \, \arrowvert \,
c^2=(cd)^2=d^2 \right\rangle$, of order 8, is a hyper-Hopfian group
(see \cite[Section~ 4]{D.HG}) and a coperfectly Hopfain group by Corollary
\ref{Nil}, which is not ultra-Hopfian (see \cite[Section~ 2]{VV1}). On the other hand, the solvable group of order
$p^4$ ($p$-prime), $$\left\langle x,y\, \arrowvert \,
x^{p^2}=y^{p^2}=1, \, y^{-1}xy=x^{1+p} \right\rangle$$ is not
hyper-Hopfian (see \cite[Section~ 4]{D.HG}), hence not ultra-Hopfian, but it is
coperfectly Hopfian by Theorem \ref{prop1}.

The group of rational numbers, $\mathbb{Q}$, is a coperfectly Hopfian group since it is Abelian and ultra-Hopfian (see \cite[Section~ 2]{VV1}).

The next lemma follows easily from  \cite[Theorem~ 2.10]{MKS}.

\begin{lemma} \label{New_1} Let $K_0$ be a free group on $k_0$ generators, and let $K_1$ and $K_2$ be nontrivial subgroups of $K_0$ such that $K_2 \unlhd K_1 \unlhd K_0$ and $K_2$ is finitely generated. Then, both $K_1$ and $K_2$ are free groups on $k_1$ and $k_2$ generators respectively, such that $[K_0 : K_2] < \infty$ and $k_0 \le  k_1 \le k_2 < \infty$, where $k_i = [K_{i-1} : K_i](k_{i-1}-1) + 1$ for $i = 1,  2$.
\end{lemma}

\begin{theorem} Every finitely generated free group is coperfectly Hopfian.
\end{theorem}

    \begin{proof} Let $F_n$ be a free group of $n$ generators, $n >1$ (note that $F_1=\mathbb{Z}$ is coperfectly Hopfian by Corollary \ref{Ab}). Let $f: F_n \to F_n$ be a homomorphism with $f(F_n) \unlhd F_n$ and $F_n/f(F_n)$ perfect. In this case $f(F_n) \ne 1$, since $F_n$ is not a perfect group. Then by Lemma \ref{New_1} it follows that $f(F_n)$ is a free group on $k$ generators, where $k=\left[ F_n: f(F_n) \right](n-1)+1 \ge n$ generators. This  can only occur when $\left[ F_n: f(F_n) \right]=1$, i.e., when $f$ is surjective. Since $F_n$ is Hopfian by \cite[Theorem~ 2.13]{MKS}, it follows that $f$ is an automorphism.
    \end{proof}

In  the next few results, we will be using some of the well-known properties of the fundamental group of a closed orientable surface $S$ of genus $g>1$, that we list here.  Recall that \[ \pi_1(S)=\left\langle a_1, b_1, \dots, a_g, b_g \left\arrowvert [a_1,b_1][a_2,b_2]\cdots [a_g,b_g]\right. \right\rangle,\]
     and $S$ has a cell structure with
    one 0-cell, $2g$ 1-cells, and one 2-cell. The 1-skeleton is a wedge
    sum of $2g$ circles and the 2-cell is attached along the loop given by the
    product of the commutators of these generators,
    $[a_1,b_1][a_2,b_2]...[a_g,b_g]$. By \cite[Proposition~ 2.45]{H} $\pi_1(S)$ is torsion-free, since $S$ is a 2-dimensional CW complex that is a $K(\pi_1(S),1)$ space by \cite[Example~ 1B.2]{H}. In addition, it is well known that $\pi_1(S)$ is not solvable (hence not Abelian). These groups are residually finite \cite{He} and finitely generated, hence Hopfian \cite{Mal}.

\begin{lemma} \label{New_2} Let $S$ be a closed orientable surface of genus $g_0 > 1$, and let $K_1$ and $K_2$ be nontrivial subgroups of $K_0 = \pi_1(S)$ such that $K_2 \unlhd K_1 \unlhd K_0$ and $K_2$ is finitely generated. Then, $[K_0 : K_2] < \infty$ and there exist $g_1,  g_2 \in \mathbb{Z}$ with $g_0 \le g_1 \le g_2$ such that $K_i\cong \pi_1(S_i)$ for $i = 1, 2$, where $S_i$ is a closed orientable surface of genus $g_i = [K_{i-1} : K_i](g_{i-1}-1)+1$.
\end{lemma}

\begin{proof} On the contrary, suppose that $[K_0 : K_1] = \infty$. By \cite[Corollary~
1]{HKS} and Lemma \ref{New_1}, both $K_1$ and $K_2$ are finitely generated free groups, which contradicts \cite[Theorem~ 6.1]{Gr}. Hence, $[K_0 : K_1] < \infty$, and the lemma now follows immediately from \cite[Corollary~ 3.1.9]{CGKZ} and
\cite[Theorem~ 6.1]{Gr}.
\end{proof}

\begin{theorem} \label{PH-S} Let $S$ be a closed orientable surface. Then $\pi_1(S)$ is coperfectly Hopfian.
\end{theorem}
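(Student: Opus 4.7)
The plan is to reduce the statement to the already-recorded observation (near the start of Section \ref{PHG}) that every ultra-Hopfian group which is not perfect is perfectly-Hopfian. So the real work is to verify that $G$ is ultra-Hopfian and that $G$ is not perfect.

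For ultra-Hopfianness, I would invoke \cite[Theorem~3.2]{VV1} (quoted above in the Definitions and Notation section): since $S$ has negative Euler characteristic $\chi(S) = 2 - 2g < 0$, $S$ is a special manifold, so every self-map $f: S \to S$ whose image $f_*(G)$ is a non-trivial normal subgroup of $G$ is a homotopy equivalence. Because $S$ is a $K(G,1)$ space (as recalled in the setup just before the theorem), every homomorphism $\varphi: G \to G$ is realized, up to homotopy, as $f_*$ for some self-map $f$ of $S$, and such an $f$ is a homotopy equivalence precisely when $\varphi$ is a $\pi_1$-isomorphism. Combining these, every $\varphi: G \to G$ whose image is a non-trivial normal subgroup of $G$ is an automorphism; that is, $G$ is ultra-Hopfian.

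For non-perfectness, I would note that the abelianization $G^{\mathrm{ab}} = H_1(S) = \mathbb{Z}^{2g}$ is non-trivial for $g \geq 1$. Now suppose $\varphi: G \to G$ has $\varphi(G) \unlhd G$ with $G/\varphi(G)$ perfect. The case $\varphi(G) = \{1\}$ would force $G \cong G/\varphi(G)$ to be perfect, contradicting $G^{\mathrm{ab}} \ne 0$; hence $\varphi(G)$ is non-trivial and normal, and the ultra-Hopfianness of $G$ yields that $\varphi$ is an automorphism. The only step requiring real thought is the dictionary between $S$ being a special manifold (a topological property) and $G$ being ultra-Hopfian (a group-theoretic property); once that $K(G,1)$ translation is in place, the conclusion follows at once from the general implication ultra-Hopfian plus non-perfect $\Rightarrow$ perfectly-Hopfian.
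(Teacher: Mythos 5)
Your proposal is correct, but it takes a genuinely different route from the paper's proof. The paper argues directly at the group level: given $f\colon G \to G$ with $f(G) \unlhd G$ and $G/f(G)$ perfect, it notes $f(G) \ne 1$ (since $G$ is not perfect), rules out $1 < [G : f(G)] < \infty$ via \cite[Corollary~3.1.9]{CGKZ} (a finite-index subgroup is again a surface group, of strictly larger genus $g_1=k(g-1)+1$, which cannot be a quotient of $G$ by comparing the abelianizations $\mathbb{Z}^{2g} \to \mathbb{Z}^{2g_1}$), rules out infinite index via Griffiths' theorem \cite[Theorem~6.1]{Gr} (no non-trivial, finitely generated, normal subgroup of infinite index), and concludes that $f$ is onto, hence an automorphism by Hopficity. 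You instead reuse the topological input already quoted in the paper --- that $S$ is a special manifold \cite[Theorem~3.2]{VV1} --- and translate it through the $K(G,1)$ property into the statement that $G$ is ultra-Hopfian, after which the observation from Section~\ref{PHG} that non-perfect ultra-Hopfian groups are perfectly-Hopfian finishes the argument (non-perfectness coming from $H_1(S)\cong\mathbb{Z}^{2g}\ne 0$). Your translation step is sound: since $S$ is an aspherical CW complex, every $\varphi\colon G\to G$ is induced by a based self-map of $S$, such a map has non-trivial normal image on the $\pi_1$-level exactly when $\varphi$ does, and a basepoint-preserving homotopy equivalence induces an automorphism. What your route buys is brevity and reuse of cited machinery; what the paper's route buys is a self-contained, purely group-theoretic argument that avoids the realization of homomorphisms by maps and makes explicit which subgroup-structure facts about surface groups (the index formula of \cite{CGKZ} and Griffiths' theorem) drive the result --- essentially the same facts that underlie the specialness theorem you invoke.
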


    \begin{proof} Let $f: \pi_1(S) \to \pi_1(S)$ be a homomorphism with $f(\pi_1(S)) \unlhd \pi_1(S)$ and $\pi_1(S)/f(\pi_1(S))$ perfect. Note that $f(\pi_1(S)) \ne 1$ since $\pi_1(S)$ is not a perfect group. Then, by Lemma \ref{New_2} it follows that $\left[ \pi_1(S): f(\pi_1(S)) \right]< \infty$ and $f(\pi_1(S))$ is isomorphic to the fundamental group of a closed orientable surface of genus $g_1=\left[ \pi_1(S) : f(\pi_1(S)) \right](g-1)+1 \ge g$. This can only occur when $\left[ \pi_1(S) : f(\pi_1(S)) \right]=1$, i.e., when $f$ is surjective. Since $\pi_1(S)$ is Hopfian, it follows that $f$ is an automorphism.
    \end{proof}

Next we discuss another property among groups that we use later.

A group $G$ is {\it normally incommensurable with another group} $H$ if there is
no nontrivial homomorphism $f: G \to H$ such that $f(G) \trianglelefteq K \trianglelefteq H$ for some normal subgroup $K$ in $H$.

The proof of the next proposition follows easily from Lemma \ref{New_1}.

\begin{proposition} \label{fgfree} Let $F$ be a free group and $H$ be a finitely generated group with fewer generators than $F$. Then $H$ is normally incommensurable with $F$.
\end{proposition}

\begin{theorem} \label{fg} Let S be a closed orientable surface of genus $g > 1$ and $H$ be either a finitely generated virtually solvable group or a group on $h$ generators with $h < 2g$. Then, $H$ is normally incommensurable with
respect to $\pi_1(S)$.
\end{theorem}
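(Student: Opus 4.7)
The plan is to argue by contradiction along the same two-case outline as the proofs of Theorems \ref{sol} and \ref{VAG}, except that the role played there by the solvability (or virtual abelianness) of $H$ will now be played by the numerical hypothesis $h<2g$. I would suppose that $f\colon H \to \pi_1(S)$ is a non-trivial homomorphism and $K \trianglelefteq \pi_1(S)$ is a normal subgroup with $f(H) \trianglelefteq K$; then $f(H)$ is a non-trivial subgroup of $\pi_1(S)$ generated by at most $h$ elements, and I would split on the index $[\pi_1(S):K]$.

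In the finite-index case, $s := [\pi_1(S):K] < \infty$, I would appeal to \cite[Corollary 3.1.9]{CGKZ} to identify $K$ with the fundamental group of a closed orientable surface of genus $g_1 = s(g-1)+1 \geq g$. Then $f(H) \trianglelefteq K$ is a non-trivial, finitely generated, normal subgroup of a surface group, so \cite[Theorem 6.1]{Gr} gives that it has finite index $t$ in $K$, and a second application of \cite[Corollary 3.1.9]{CGKZ} identifies $f(H)$ with the fundamental group of a closed orientable surface of genus $g_2 = t(g_1-1)+1 \geq g$. I would close this case by observing that the abelianization of $f(H)$ is $\mathbb{Z}^{2g_2}$, whose minimum number of generators is $2g_2 \geq 2g > h$, contradicting the fact that $f(H)$ is generated by at most $h$ elements.

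In the infinite-index case I would invoke \cite[Corollary 1]{HKS} to get that $K$ is free, and then use the classical fact (via the Nielsen-Schreier rank formula, as in Case 2 of Theorem \ref{sol}) that a non-trivial finitely generated normal subgroup of a free group has finite index, which in turn forces $K$ itself to be finitely generated of finite rank. But then $K$ is a non-trivial, finitely generated, normal subgroup of $\pi_1(S)$, so \cite[Theorem 6.1]{Gr} forces $K$ to have finite index in $\pi_1(S)$, contradicting the case assumption.

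The principal obstacle is the rank comparison in the finite-index case: one must verify that the two consecutive applications of the finite-sheeted covering-surface formula keep the genus bounded below by $g$, and then extract the sharp lower bound $2g_2$ on the number of generators of a genus-$g_2$ surface group from its abelianization. The infinite-index case essentially recycles the free-group argument already used in Theorem \ref{sol}, so no genuinely new difficulty arises there.
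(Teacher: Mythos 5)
Your proposal is correct and follows essentially the same route as the paper: the same two-case split on $[\pi_1(S):K]$, the same use of \cite[Corollary~3.1.9]{CGKZ} and \cite[Theorem~6.1]{Gr} in the finite-index case with a contradiction from the generator count, and the same free-group argument via \cite[Corollary~1]{HKS} and the Nielsen--Schreier rank formula in the infinite-index case. The only differences are cosmetic: you track the genus through two successive applications of the covering-surface formula where the paper makes a ``without loss of generality'' reduction to $f(H)\trianglelefteq\pi_1(S)$, and you justify the lower bound $2g_2$ on the number of generators explicitly via the abelianization, which the paper leaves implicit.
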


\begin{proof} On the contrary, suppose that there exist a nontrivial homomorphism $f : H \to \pi_1(S)$ and a subgroup $K$ of $\pi_1(S)$ such that $f(H)\unlhd K \unlhd \pi_1(S)$. By Lemma \ref{New_2}, we may assume that $H$ is a finitely generated virtually solvable group. By the proof of \cite[Theorem 2.25]{M}, $f(H)$ has a normal solvable subgroup $L$ of finite index. Then Lemma \ref{New_2} and \cite[Corollary~ 3.1.9]{CGKZ}, imply that $L \cong \pi_1(S')$, where $S'$ is a closed orientable surface of genus $g' \ge g > 1$, a contradiction (since $\pi_1(S')$ cannot be solvable).
\end{proof}

\section{Products of Coperfectly Hopfian Groups} \label{PPHG}

  Next, we investigate when the property of being coperfectly Hopfian (discussed in Section \ref{PHG}) is preserved when taking free products and direct products of finitely generated  coperfectly Hopfian groups.

\begin{proposition} \label{free product} Let $G_1, G_2$ be nontrivial
finitely generated residually finite groups, $G_2 \ne
\mathbb{Z}_2$, and $G_1 \ast G_2$ not perfect. Then $G_1\ast G_2$ is a coperfectly Hopfian group.
\end{proposition}

       \begin{proof}
       Since $G_1\ast G_2$ is an ultra-Hopfian group by \cite[Theorem~ 2.2]{VV1}, it follows that
       $G_1\ast G_2$ is coperfectly Hopfian.
       \end{proof}

The following corollaries of Proposition \ref{free product} follow from
\cite[Corollaries~ 2.3, 2.4]{VV1} respectfully.

\begin{corollary}\label{free product 2} If $G_1$, $G_2$ are nontrivial
finitely generated groups such that $G_1$ is non-cyclic, and $G_1
\ast G_2$ is Hopfian and not perfect, then $G_1 \ast G_2$ is coperfectly Hopfian.
\end{corollary}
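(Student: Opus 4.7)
The plan is to reduce this corollary to Theorem \ref{free product} in the same way that Theorem \ref{free product} itself reduces to \cite[Theorem~2.2]{VV1}, with the cited \cite[Corollary~2.3]{VV1} playing the role that Theorem~2.2 of \cite{VV1} plays in the preceding proof. That is, the hypotheses here (namely that $G_1$ is non-cyclic and $G_1\ast G_2$ is Hopfian) should be precisely those under which \cite[Corollary~2.3]{VV1} concludes that $G_1\ast G_2$ is ultra-Hopfian; so one first invokes that corollary.

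Once $G_1\ast G_2$ is known to be ultra-Hopfian, the remaining step is immediate from the general observation made directly after the definition of perfectly-Hopfian in Section \ref{PHG}: every ultra-Hopfian group which is not perfect is automatically perfectly-Hopfian. Since we are assuming $G_1\ast G_2$ is not perfect, this applies and yields the conclusion.

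To write the proof out, I would simply say: let $\varphi:G_1\ast G_2\to G_1\ast G_2$ be a homomorphism with $\varphi(G_1\ast G_2)$ normal in $G_1\ast G_2$ and with perfect quotient. Because $G_1\ast G_2$ is not perfect, $\varphi(G_1\ast G_2)$ is non-trivial. Applying \cite[Corollary~2.3]{VV1}, $G_1\ast G_2$ is ultra-Hopfian, so the fact that the image is a non-trivial normal subgroup forces $\varphi$ to be an automorphism, which is exactly the perfectly-Hopfian condition.

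The only real obstacle is a bookkeeping one: one must verify that the hypotheses of \cite[Corollary~2.3]{VV1} are genuinely met here, since that reference is external to the excerpt. Given the explicit parallel with Theorem \ref{free product} (which likewise defers the nontrivial content to an external ultra-Hopfian result and then exploits the non-perfect hypothesis) this is essentially a formality, and no new group-theoretic argument is needed.
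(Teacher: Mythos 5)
Your proposal is correct and matches the paper's own treatment: the paper derives this corollary by citing \cite[Corollary~2.3]{VV1} to get that $G_1 \ast G_2$ is ultra-Hopfian, and then, exactly as in the proof of Theorem \ref{free product}, uses the hypothesis that $G_1 \ast G_2$ is not perfect (so any homomorphism with perfect quotient has non-trivial normal image) to conclude it is perfectly-Hopfian. Your spelled-out verification of that last step is just an expansion of the same argument, so there is no substantive difference.
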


\begin{corollary}\label{free product 3} If $G_1$, $G_2$ are nontrivial
finitely generated, freely indecomposable coperfectly Hopfian groups, and $G_1$ is
non-cyclic, then $G_1 \ast G_2$ is coperfectly Hopfian.
\end{corollary}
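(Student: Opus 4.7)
The plan is to reduce everything to Corollary \ref{free product 2} applied to the pair $(G_1,G_2)$. That corollary demands four things: the factors are non-trivial and finitely generated, $G_1$ is non-cyclic, $G_1 \ast G_2$ is Hopfian, and $G_1 \ast G_2$ is not perfect. The first two conditions are part of the standing hypotheses, so the task reduces to verifying the Hopfian and non-perfect conditions for the free product.

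For the non-perfect condition, I would invoke the general fact, noted at the start of Section \ref{PHG}, that no perfect group can be perfectly-Hopfian. Applied to $G_1$ and $G_2$, this forces both abelianizations $G_1^{\rm ab}$ and $G_2^{\rm ab}$ to be non-trivial. Since abelianization sends a free product to the direct sum of abelianizations, $(G_1 \ast G_2)^{\rm ab} \cong G_1^{\rm ab} \oplus G_2^{\rm ab}$ is non-trivial, whence $G_1 \ast G_2$ is not perfect.

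For the Hopfian condition, I would appeal to \cite[Corollary~2.4]{VV1}, which is precisely the ingredient the present Corollary is advertised to rest on; it guarantees Hopfianness (in fact, ultra-Hopfianness) of $G_1 \ast G_2$ under the hypotheses of finitely generated, freely indecomposable factors with $G_1$ non-cyclic, using that perfectly-Hopfian implies Hopfian. The hard part of the argument is this Hopfianness step, since free products of Hopfian groups need not be Hopfian in general; it is precisely the free indecomposability of both factors, together with the Kurosh-type control on subgroups of free products, that rules out non-injective self-epimorphisms in \cite[Corollary~2.4]{VV1} without needing residual finiteness.

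Once both the Hopfian and non-perfect properties of $G_1 \ast G_2$ are in hand, Corollary \ref{free product 2} applies verbatim and yields that $G_1 \ast G_2$ is perfectly-Hopfian.
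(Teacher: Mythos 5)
Your proposal is correct and follows essentially the same route as the paper: the paper deduces this corollary from \cite[Corollary~2.4]{VV1} (which supplies the ultra-Hopfian, hence Hopfian, property of $G_1 \ast G_2$ from the freely indecomposable, finitely generated, Hopfian hypotheses), combined with the observation that the free product is not perfect because neither perfectly-Hopfian factor is. Your detour through Corollary \ref{free product 2} rather than concluding directly from ultra-Hopfian plus non-perfect is only a cosmetic repackaging of the same ingredients.
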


Corollary \ref{free product 3} implies that under some particular conditions, the coperfectly Hopfian property is closed with
respect to free products.

Next we focus on direct products of coperfectly Hopfian groups. First, we need the following lemma.

\begin{lemma} \label{L1} Let $\phi :G_1 \times G_2 \to G_1 \times G_2$ be a
homomorphism. In addition,  let $i_{G_1}: G_1 \hookrightarrow G_1 \times G_2$, $i_{G_2}: G_2 \hookrightarrow G_1 \times G_2$ be the inclusions, and  $pr_{G_1}: G_1 \times G_2 \to G_1$, $pr_{G_2}: G_1 \times G_2 \to G_2$ be the projections onto the first and second factor respectively.

    \begin{enumerate}
    \item \label{L1.1} If $pr_{G_2} \circ \phi \circ i_{G_1}$ is trivial, then $\phi(G_1 \times 1) \subseteq G_1 \times 1$.
    \item \label{L1.2} If $pr_{G_2} \circ \phi \circ i_{G_1}$ is trivial and $pr_{G_2} \circ \phi \circ i_{G_2}$ is an isomorphism,  then $\phi(G_1 \times G_2) \cap (G_1\times 1) = \phi (G_1 \times 1) = \phi \circ i_{G_1} (G_1)$.
    \item If $\phi(G_1 \times G_2)$ is a normal subgroup of $G_1 \times G_2$, then $\phi(G_1 \times G_2) \cap (G_1\times 1)$ is  a normal subgroup of $G_1 \times 1$.
    \end{enumerate}
\end{lemma}

\begin{proof}
    \begin{enumerate}
    \item Since $pr_{G_2} \circ \phi \circ i_{G_1}$ is trivial, then $pr_{G_2} \circ \phi \circ i_{G_1}(G_1) = pr_{G_2} (\phi (G_1 \times 1))=1$, which implies that $\phi(G_1 \times 1) \subseteq G_1 \times 1$.
    \item By part \ref{L1.1}, it follows that $\phi(G_1 \times 1) \subseteq \phi(G_1 \times G_2) \cap (G_1 \times 1)$. We only need to prove that $\phi(G_1 \times G_2) \cap (G_1 \times 1) \subseteq \phi(G_1 \times 1)$.

        Let $x \in \phi(G_1 \times G_2) \cap (G_1 \times 1)$. Then $x=\phi(g_1,g_2) \in G_1 \times 1$ for some $(g_1,g_2) \in G_1 \times G_2$. Hence
        \[ \begin{array}{rl}
        x=&\phi(g_1,g_2)=\phi((g_1,e_{G_2})(e_{G_1},g_2))=\phi(g_1,e_{G_2})\phi(e_{G_1},g_2)\\
    =&(pr_{G_1}\circ \phi \circ i_{G_1}(g_1), e_{G_2})(pr_{G_1}\circ \phi \circ i_{G_2}(g_2),pr_{G_2}\circ \phi \circ i_{G_2}(g_2))\\
    =&(pr_{G_1}\circ \phi \circ i_{G_1}(g_1) pr_{G_1}\circ \phi \circ i_{G_2}(g_2), pr_{G_2}\circ \phi \circ i_{G_2}(g_2)) \in G_1 \times 1.
    \end{array}\]

    Hence $pr_{G_2}\circ \phi \circ i_{G_2}(g_2)=e_{G_2}$, and since $pr_{G_2} \circ \phi \circ i_{G_2}$ is an isomorphism, it follows that $g_2=e_{G_2}$, i.e., $x=\phi(g_1,e_{G_2}) \in \phi(G_1 \times 1)$.

    \item Since $\phi(G_1 \times G_2) \unlhd G_1 \times G_2$, it follows that \[ (g,e_{G_2}) \left(\phi( G_1 \times G_2) \cap (G_1 \times 1) \right)(g^{-1},e_{G_2}) \subseteq \phi(G_1 \times G_2)\] for all $g \in G_1$. Moreover,
        \[ (g,e_{G_2})(a,e_{G_2})(g^{-1},e_{G_2})=(gag^{-1},e_{G_2}) \in G_1 \times 1\] for all $a \in pr_{G_1}\left(\phi( G_1 \times G_2) \cap (G_1 \times 1) \right)$. Hence, \[ (g,e_{G_2}) \left(\phi( G_1 \times G_2) \cap (G_1 \times 1) \right)(g^{-1},e_{G_2}) \subseteq \phi(G_1 \times G_2) \cap (G_1 \times 1)\] for all $g \in G_1$.
        Therefore, $\phi(G_1 \times G_2) \cap (G_1\times 1) \unlhd G_1 \times 1$.
    \end{enumerate}\end{proof}

\begin{theorem} \label{PH} Let $G_1$, $G_2$ be coperfectly Hopfian groups such that
$G_1$ is normally incommensurable with $G_2$. Then $G_1 \times G_2$ is
coperfectly Hopfian.
\end{theorem}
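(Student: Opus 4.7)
The plan is to force $\varphi$ to respect the product structure using normal incommensurability, and then apply the perfectly-Hopfian hypothesis to each factor in turn. The order of the three arguments — cross-term killing, then $G_2$-automorphism, then $G_1$-automorphism — is dictated by what each step needs from the previous, and to finish I would conclude bijectivity of $\varphi$ directly.

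Let $\varphi:G_1 \times G_2 \to G_1 \times G_2$ satisfy $\varphi(G_1 \times G_2) \trianglelefteq G_1 \times G_2$ with perfect quotient. First I would show that $pr_{G_2} \circ \varphi \circ i_{G_1}:G_1 \to G_2$ is trivial. Since $G_1 \times 1 \trianglelefteq G_1 \times G_2$, the image $\varphi(G_1 \times 1)$ is normal in $\varphi(G_1 \times G_2) \trianglelefteq G_1 \times G_2$; applying $pr_{G_2}$ (which takes normal subgroups of $G_1 \times G_2$ to normal subgroups of $G_2$) then yields $pr_{G_2}(\varphi(G_1 \times 1)) \trianglelefteq pr_{G_2}(\varphi(G_1 \times G_2)) \trianglelefteq G_2$. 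Normal incommensurability of $G_1$ with $G_2$ forces triviality of this composition, so by Lemma \ref{L1}(\ref{L1.1}), $\varphi(G_1 \times 1) \subseteq G_1 \times 1$.

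Next, I would apply perfectly-Hopfian to $G_2$ via $\bar{p} = pr_{G_2} \circ \varphi \circ i_{G_2}:G_2 \to G_2$. Its image is $pr_{G_2}(\varphi(G_1 \times G_2))$ (since $\varphi(G_1 \times 1) \subseteq G_1 \times 1$ projects to $1$), which is normal in $G_2$. The quotient $G_2/\bar{p}(G_2)$ identifies with $(G_1 \times G_2)/(\varphi(G_1 \times G_2) \cdot (G_1 \times 1))$, a quotient of the perfect $(G_1 \times G_2)/\varphi(G_1 \times G_2)$, hence perfect. Therefore $\bar{p}$ is an automorphism, and in particular $\varphi(G_1 \times G_2) \cdot (G_1 \times 1) = G_1 \times G_2$. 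Lemma \ref{L1}(2),(3) then give $\varphi(G_1 \times 1) = \varphi(G_1 \times G_2) \cap (G_1 \times 1) \trianglelefteq G_1 \times 1$, and the second isomorphism theorem combined with the equality above yields $(G_1 \times 1)/\varphi(G_1 \times 1) \cong (G_1 \times G_2)/\varphi(G_1 \times G_2)$, which is perfect. Perfectly-Hopfian on $G_1$ then makes $\varphi|_{G_1 \times 1}$ an automorphism of $G_1 \times 1$.

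To finish, I would verify bijectivity of $\varphi$ from these two automorphism facts. For surjectivity, given $(g_1,g_2)$, pick $h_2 \in G_2$ with $\bar{p}(h_2) = g_2$, write $\varphi(1,h_2) = (g_1',g_2)$, and then pick $h_1$ with $\varphi(h_1,1) = (g_1(g_1')^{-1},1)$ using surjectivity of $\varphi|_{G_1 \times 1}$. For injectivity, if $\varphi(g_1,g_2) = e$, the triviality of the cross-term forces $\bar{p}(g_2) = e$, hence $g_2 = e$, and then $\varphi(g_1,1) = e$ forces $g_1 = e$ by injectivity on $G_1 \times 1$. The main obstacle is the perfect-quotient verification for $\varphi|_{G_1 \times 1}$ in the third step, which depends crucially on $\bar{p}$ being surjective — so the three stages cannot be reordered, and the hypothesis that $G_1$ is normally incommensurable with $G_2$ (rather than the reverse) suffices only because the $G_2$-factor argument is done purely via the quotient structure and does not require killing a $G_2 \to G_1$ cross-term in advance.
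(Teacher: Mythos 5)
Your proposal is correct and follows essentially the same route as the paper: kill the cross-term $pr_{G_2}\circ\varphi\circ i_{G_1}$ via normal incommensurability, apply the perfectly-Hopfian hypothesis first to $G_2$ through the projected quotient and then to $G_1$ via Lemma \ref{L1}, and conclude that $\varphi$ is an isomorphism. Your identifications of the quotients (via the correspondence and second isomorphism theorems) and your explicit surjectivity/injectivity check at the end are just slightly more detailed versions of the paper's natural epimorphisms $f$, $A$, $B$ and its one-line final assertion.
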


    \begin{proof} Let $\phi :G_1 \times G_2 \to G_1 \times G_2$ be a
    homomorphism with $\phi (G_1 \times G_2) \trianglelefteq (G_1 \times G_2)$ and
    $\left( G_1 \times G_2 \right) / \phi(G_1 \times G_2)$ perfect. Since $G_1 \times G_2$ is not perfect, $\phi(G_1 \times G_2) \ne 1$. For $k=1, 2$, let $i_{G_k}: G_k \to G_1 \times G_2$ be the inclusion and $pr_{G_k}: G_1 \times G_2 \to G_k$ be the projection.

    First, we show that $pr_{G_2} \circ \phi \circ i_{G_2}:G_2 \to G_2$ is an isomorphism. Consider the map $pr_{G_2} \circ \phi \circ i_{G_1}: G_1 \to G_2$.  Using the fact that $\phi(G_1 \times G_2) \unlhd G_1 \times G_2$ and $pr_{G_2}$ is onto, it follows that $pr_{G_2} \circ \phi (G_1 \times G_2) \unlhd G_2$. Note that $\phi(G_1 \times 1) \unlhd \phi(G_1 \times G_2)$. Hence, $pr_{G_2}\circ \phi \circ i_{G_1}(G_1) \unlhd pr_{G_2} \circ \phi (G_1 \times G_2) \unlhd G_2$.
      Since $G_1$ is normally incommensurable with $G_2$, it follows that $pr_{G_2} \circ \phi \circ i_{G_1}$ is trivial. Then
     $pr_{G_2} \circ \phi (G_1 \times G_2) = pr_{G_2} \circ \phi \circ i_{G_2}(G_2)$. Hence, $pr_{G_2} \circ \phi \circ i_{G_2}(G_2)= pr_{G_2} \circ \phi (G_1 \times G_2) \unlhd G_2$. Since we have the epimorphism $$\widetilde{pr_{G_2}}: \left(G_1 \times G_2\right) /
    \phi(G_1 \times G_2) \to G_2/\left( pr_{G_2} \circ \phi(G_1 \times G_2)\right)$$ induced by $pr_{G_2}$, we see that $G_2/\left( pr_{G_2} \circ \phi(G_1 \times G_2)\right)$ is a perfect group. Now, the property of $G_2$ being coperfectly Hopfian, implies that
     $pr_{G_2} \circ \phi  \circ i_{G_2}: G_2 \to G_2$ is an isomorphism.

     Next we show that $pr_{G_1} \circ \phi \circ i_{G_1}: G_1 \to G_1$ is an isomorphism. Using the fact that $pr_{G_2} \circ \phi \circ i_{G_1}$ is trivial, $pr_{G_2} \circ \phi  \circ i_{G_2}$ is an isomorphism, and  $\phi(G_1 \times G_2)$ is a normal subgroup of $G_1 \times G_2$, by Lemma \ref{L1} it follows that  $\phi(G_1 \times G_2) \cap (G_1\times 1)=\phi (G_1 \times 1)=\phi \circ i_{G_1} (G_1)$ and $\phi(G_1 \times G_2) \cap (G_1\times 1)$ is  a normal subgroup of $G_1 \times 1$. Hence, $pr_{G_1} \circ \phi \circ i_{G_1}(G_1) \unlhd G_1$. In particular, $\left( G_1 \times 1 \right) / \left( \phi \circ i_{G_1}(G_1) \right) \cong G_1 / \left( pr_{G_1} \circ \phi \circ i_{G_1}(G_1) \right)$. Since we have the epimorphism $$\widetilde{pr_{G_1}}: (G_1 \times G_2) / \phi(G_1 \times G_2) \to G_1 / \left( pr_{G_1} \circ \phi(G_1 \times G_2) \right) \cong G_1 / \left( pr_{G_1} \circ \phi \circ i_{G_1} (G_1)\right)$$ induced by $pr_{G_1}$, we see that $ G_1 / \left( pr_{G_1} \circ \phi \circ i_{G_1} (G_1)\right)$ is a perfect group. Since $G_1$ is coperfectly Hopfian, it follows that $pr_{G_1} \circ \phi \circ i_{G_1}$ is an isomorphism.

    Since $pr_{G_k} \circ \phi \circ i_{G_k}$ is an isomorphism for $k=1, 2$ and $pr_{G_2} \circ \phi \circ i_{G_1}$is trivial, $\phi$ is an isomorphism.\end{proof}

\section{\bf Shape ${\rm \bf m_{simpl}o}$-Fibrators} \label{SF}

The following PL setting is used for the rest of this paper: let $N$ be a fixed closed PL $n$-manifold, $M$ a
(PL) $(n+k)$-manifold, $B$ a polyhedron, and $p:M \to B$ a
proper, surjective (PL) map. The map $p:M \to B$ is said to be an {\it $N$-shaped (PL) map} if each
fiber $p^{-1}(b)$, $b \in B$, has the homotopy type (or more generally the shape \cite{B, M-S}) of $N$.

The closed PL $n$-manifold $N$ is called a {\it codimension-$\it k$ shape ${\rm \it
m_{simpl}}$-fibrator} \cite{VV1} if for every closed PL $(n+k)$-manifold $M$
and $N$-shaped PL map $p:M \to B$, where $B$ is a simplicial
triangulated manifold, $p$ is an approximate fibration. Note that the abbreviation ${\rm m_{simpl}}$ points out that the target space is a
simplicial triangulated manifold. Similarly, the manifold  $N$ is a {\it codimension-$\it k$ shape orientable
${\rm \it m_{simpl}}$-fibrator} if for every closed
\underline{orientable} PL $(n+k)$-manifold $M$ and $N$-shaped PL map
$p:M \to B$, where $B$ is a simplicial triangulated manifold, $p$ is
an approximate fibration. We abbreviate this by writing that $N$ is
a codimension-$k$ shape ${\rm m_{simpl}o}$-fibrator. If $N$ is a codimension-$k$ shape ${\rm m_{simpl}}$-fibrator
(codimension-$k$ shape ${\rm m_{simpl} o}$ -fibrator) for all $k$,
then $N$ is called a {\it shape ${\rm \it m_{simpl}}$-fibrator}
({\it shape ${\rm \it m_{simpl}\it o}$-fibrator}).

Note that there cannot be much difference between
codimension-2 PL fibrators and codimension-2 PL shape ${\rm
m_{simpl}}$-fibrators, since the image spaces $B$ in codimension-2 are always manifolds by
\cite[Theorem~3.6]{D.W.}. The two classes are precisely the same among
Hopfian manifolds with Hopfian fundamental groups.

Let $p:M \to B$ be an $N$-shaped PL map. The {\it continuity set} of
$p$, $C$, consists of all points $b \in B$, such
that under any retraction $R:p^{-1}U \to p^{-1}b$ defined over a
neighborhood $U\subset B$ of $b$, $b$ has another neighborhood $V_b
\subset U$, such that for all $x \in V_b$, $R\arrowvert :p^{-1}x \to
p^{-1}b$ is a degree one map. Establishing that $p$ is an approximate fibration, usually requires one to prove that the target space $B$ equals the continuity set of $p$, as the next lemma shows. Note that this lemma follows immediately from the definitions and Coram and Duvall's characterization of approximate fibrations in terms of movability properties \cite[Proposition~3.6]{Coram 2}.

\begin{lemma} \label{AF1} Let $N$ be a Hopfian $n$-manifold with a Hopfian fundamental group and $p:M \to B$ be an $N$-shaped PL map, where $M$ is a closed orientable PL $(n+k)$-manifold, and $B$ is a
triangulated manifold. Then the continuity set of $p$, $C$, is equal
to $B$ if and only if $p$ is an approximate fibration over $B$.
\end{lemma}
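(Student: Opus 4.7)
The plan is to prove both implications by translating between the continuity-set condition and the movability characterization of approximate fibrations from \cite[Proposition~3.6]{Coram 2}, using the two Hopfian hypotheses to interpolate between degree-one maps and homotopy equivalences. In this $N$-shaped PL setting over a triangulated manifold, the Coram--Duvall criterion reduces to the statement that for every $b \in B$ there is a retraction $R: p^{-1}U \to p^{-1}b$ on some neighborhood $U$ of $b$ whose restriction $R|_{p^{-1}x}: p^{-1}x \to p^{-1}b$ is a homotopy equivalence for every $x$ in a smaller neighborhood $V_b \subset U$.

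For the forward implication, assume $p$ is an approximate fibration. Given any $b \in B$ and any retraction $R$ over a neighborhood $U$, shrinking $U$ if necessary the approximate homotopy lifting property identifies $R|_{p^{-1}x}$, up to homotopy, with the canonical equivalence induced by an approximate lift of a short arc from $x$ to $b$. Since $p^{-1}x$ and $p^{-1}b$ are closed orientable $n$-manifolds of the homotopy type of $N$, any such homotopy equivalence has absolute degree one, so $b \in C$ and $C = B$.

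For the reverse implication, assume $C = B$. Fix $b \in B$, a retraction $R$ on $p^{-1}U$, and the neighborhood $V_b$ furnished by the continuity hypothesis, shrunk so that $p^{-1}V_b$ deformation retracts onto $p^{-1}b$ along fibers (in particular, the inclusion $p^{-1}b \hookrightarrow p^{-1}V_b$ is a homotopy equivalence). For any $x \in V_b$, the restriction $R|_{p^{-1}x}: p^{-1}x \to p^{-1}b$ is a degree-one map between closed orientable $n$-manifolds each of the homotopy type of $N$, and the factorization $p^{-1}x \hookrightarrow p^{-1}V_b \to p^{-1}b$ makes $R|_{p^{-1}x}$ a $\pi_1$-epimorphism. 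Because $\pi_1(N)$ is Hopfian, this epimorphism is an isomorphism; because $N$ itself is Hopfian, a degree-one self-map of $N$ inducing a $\pi_1$-isomorphism is a homotopy equivalence. Thus the Coram--Duvall movability criterion holds at every $b \in B$, and $p$ is an approximate fibration.

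The main obstacle I expect is the PL step in each direction that matches the map arising from an arbitrarily chosen retraction with the map produced by the movability or approximate-lifting framework, and in particular the shrinking of $V_b$ so that $p^{-1}V_b$ collapses to $p^{-1}b$ through fibers, which is what actually upgrades the retraction restriction to a $\pi_1$-epimorphism. Once this matchup is secured, the two Hopfian hypotheses interlock algebraically to deliver the required homotopy equivalences, and the equivalence $C = B \iff p$ is an approximate fibration follows immediately from the definitions.
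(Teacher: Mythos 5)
Your overall structure is the intended one: the paper gives no written proof of this lemma, remarking only that it follows from the definitions together with Coram--Duvall's movability characterization of approximate fibrations, and your two implications (homotopy equivalences have absolute degree one, hence approximate fibration $\Rightarrow C=B$; degree one plus the two Hopfian hypotheses upgrades the retraction restrictions to homotopy equivalences, hence $C=B \Rightarrow$ approximate fibration) are exactly how that remark is meant to be unpacked.

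There is, however, one step in your reverse implication that is genuinely flawed as written: you claim that the factorization $p^{-1}x \hookrightarrow p^{-1}V_b \to p^{-1}b$ makes $R\arrowvert_{p^{-1}x}$ a $\pi_1$-epimorphism. A composite is not surjective on $\pi_1$ just because its second factor is; you would need the inclusion-induced map $\pi_1(p^{-1}x) \to \pi_1(p^{-1}V_b)$ to be onto, and that is precisely the kind of statement that is \emph{not} automatic in this setting (establishing such fiber-inclusion surjectivity is a nontrivial ingredient of fibrator arguments elsewhere in the paper, e.g.\ Lemma \ref{surjection}; the classical non-fibrator examples with circle factors are exactly situations where nearby fibers include non-surjectively on $\pi_1$). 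Moreover the auxiliary claim that $V_b$ can be shrunk so that $p^{-1}V_b$ deformation retracts to $p^{-1}b$ ``along fibers'' is not justified and is not needed. The correct and standard repair is purely algebraic-topological and uses no geometry of $p$ at all: a map of absolute degree one between closed, orientable $n$-manifolds is automatically $\pi_1$-surjective (lift to the covering space corresponding to the image subgroup; infinite index would force degree $0$ since the cover is noncompact, and finite index $m$ would force $m$ to divide the degree, so $m=1$). With that substitution, Hopficity of $\pi_1(N)$ gives a $\pi_1$-isomorphism, Hopficity of the manifold $N$ gives a homotopy equivalence, and the Coram--Duvall criterion applies as you state; the forward direction is fine as you have it.
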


The next few results listed below are needed for the proof of the main theorem.

\begin{lemma} \cite[Lemma~4.1]{VV2} \label{surjection} Let $N$ be a Hopfian $n$-manifold and $p: M \to \mathbb{R}^k$, $k \ge 2$, be an $N$-shaped PL map from an open orientable PL $(n+k)$-manifold. Suppose $T \subset \mathbb{R}^k$ is a closed set with
$\textrm{dim } T \le k-2$. Then $j_{\sharp}: \pi_1 \big( p^{-1}(\mathbb{R}^k \backslash T ) \big) \to \pi_1 \big(
p^{-1}(\mathbb{R}^k) \big)$ is surjective, where $j:p^{-1}(\mathbb{R}^k \backslash T ) \to p^{-1}(\mathbb{R}^k)$ is
the  inclusion map.
\end{lemma}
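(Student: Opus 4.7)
The plan is to establish surjectivity of $j_\sharp$ by a PL general-position push-off argument in the total space $M$. Given any loop $\alpha:S^1\to p^{-1}(\mathbb{R}^k)=M$ whose basepoint we may take in $p^{-1}(\mathbb{R}^k\setminus T)$, the goal is to PL-homotope $\alpha$ inside $M$ (rel basepoint) to a loop $\alpha'$ whose image avoids $p^{-1}(T)$; this immediately gives $j_\sharp[\alpha']=[\alpha]\in\pi_1(p^{-1}(\mathbb{R}^k))$ and hence the claimed surjectivity.

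First I would replace $\alpha$ by a PL approximation so that its image is a $1$-dimensional PL subset of the $(n+k)$-manifold $M$; this does not alter the class $[\alpha]$. Second, I would bound the topological dimension of $p^{-1}(T)$: because $p$ is a proper PL surjection and each fiber $p^{-1}(b)$ has the shape of the $n$-manifold $N$ (hence is an $n$-dimensional PL subset of $M$), the standard PL-map dimension estimate yields $\dim p^{-1}(T)\le n+\dim T\le n+k-2$. If $T$ is not already PL, I would first enclose $T$ in a PL polyhedron of dimension at most $k-2$ inside $\mathbb{R}^k$ and pull that polyhedron back to $M$ to serve as the obstruction set.

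Third, the dimensional inequality $1+(n+k-2)=n+k-1<n+k=\dim M$ allows PL general position in $M$: a small PL ambient homotopy rel basepoint moves $\alpha$ off the closed PL set containing $p^{-1}(T)$, producing a loop $\alpha'$ in $p^{-1}(\mathbb{R}^k\setminus T)$ that is PL-homotopic to $\alpha$ in $M$, and hence $j_\sharp[\alpha']=[\alpha]$. The Hopfian hypothesis on $N$ enters the argument only through the identification of the fiber dimension as $n$; no Hopfian manipulation of homomorphisms is needed for this $\pi_1$-surjectivity statement itself, which is purely a codimension-$2$ general-position phenomenon.

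The hard part will be justifying the dimension bound $\dim p^{-1}(T)\le n+k-2$ cleanly. For a PL subset $T$ this is a standard PL-map dimension estimate, but for an arbitrary closed $T$ of covering dimension at most $k-2$ one must either approximate $T$ by a PL subset of the same dimension inside $\mathbb{R}^k$, or invoke a general-position result formulated for closed subsets of bounded topological dimension in a PL manifold. Once the dimension bound is in place, the rest is a direct application of the standard PL codimension-$2$ general-position theorem for pushing $1$-dimensional PL polyhedra off closed subsets of codimension at least $2$.
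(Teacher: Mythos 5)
You should note first that this paper does not actually prove the lemma: it is imported verbatim from \cite[Lemma~4.1]{VV2}, so your argument can only be judged against the hypotheses as stated. Judged that way, it has a genuine gap at precisely the step you yourself flag as ``the hard part.'' Your whole push-off argument rests on the bound $\dim p^{-1}(T)\le n+\dim T\le n+k-2$, which you justify by asserting that each fiber, having the shape of the $n$-manifold $N$, ``is an $n$-dimensional PL subset of $M$.'' That is not part of the hypotheses: an $N$-shaped PL map (as defined in Section~\ref{SF}) only requires each fiber to have the homotopy type, or shape, of $N$, and fibers of a proper PL map can be polyhedra of dimension far exceeding $n$. Concretely, let $q:\mathbb{R}^k\to\mathbb{R}^k$ be a proper PL map crushing a PL $k$-ball $D$ to the origin and injective elsewhere, and let $p$ be the composite $N\times\mathbb{R}^k\to\mathbb{R}^k\stackrel{q}{\longrightarrow}\mathbb{R}^k$. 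Every fiber is either $N\times\{\mathrm{pt}\}$ or $N\times D$, so $p$ is $N$-shaped, yet for $T=\{0\}$ (dimension $0\le k-2$) one has $p^{-1}(T)=N\times D$, of full dimension $n+k$. So $p^{-1}(T)$ need not have codimension $2$ in $M$, general position cannot move a loop off it, and the Hurewicz dimension-of-a-map estimate you invoke is being fed a false input. (In this example the conclusion of the lemma still holds, so the lemma itself is not threatened --- only your route to it.)

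The proposed repairs do not touch this difficulty: enclosing $T$ in a PL polyhedron of dimension at most $k-2$ is in general impossible for wild closed sets, and in any case the obstruction lives in the fibers, not in $T$. Your closing remark that the Hopfian hypothesis enters only ``through the identification of the fiber dimension as $n$'' is also off: Hopficity of a manifold is a condition on degree-one self-maps and has nothing to do with the polyhedral dimension of point preimages, so whatever role it (and orientability) plays in \cite{VV2}, it is not that. Where your outline is correct is the easy case in which the fibers over a neighborhood of $T$ really are $n$-dimensional: there $\dim p^{-1}(T)\le n+k-2$, the complement of $p^{-1}(T)$ is $0$-LCC, and loops push off exactly as you describe. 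Under the stated hypotheses, however, a proof must exploit the PL structure of $p$ over $\mathbb{R}^k$ itself (simpliciality, product structure over open simplices, retractions of preimages of stars onto fibers, or a target-level argument using $\dim T\le k-2$), rather than raw general position in $M$; as written, the dimension bound is the missing --- and, in this generality, unobtainable --- ingredient.
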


The next result (that we use later) and its proof is the analog to the Fundamental Theorem \cite[Theorem~ 5.5]{VV1} and its proof.

\begin{theorem} \label{big theorem}Let $N$ be a closed orientable PL $n$-manifold, homotopically determined by $\pi_1$ with a coperfectly Hopfian fundamental group and $p: M \to \mathbb{R}^k$, $k>2$,
be an $N$-shaped PL map from an  open orientable PL
$(n+k)$-manifold. Suppose $T \subset \mathbb{R}^k$ is closed with
$\textrm{dim } T < k-2$, and such that $p \arrowvert _{p^{-1}\big(
\mathbb{R}^k \backslash T \big)}$ is an approximate fibration. Then
$p$ is an approximate fibration.
\end{theorem}

\begin{proof} Let $T \subset \mathbb{R}^k$ be closed with $\textrm{dim } T < k-2$.
Without loss of generality we can assume that $T$ is a minimal closed set
such that $p \arrowvert _{p^{-1}\big( \mathbb{R}^k \backslash T
\big)}$ is an approximate fibration.

On the contrary, suppose that $T \ne \emptyset$. Since $T$ is a closed subset of $\mathbb{R}^k$ and $p$ is an $N$-shaped map, by Daverman and Husch's work on decompositions and approximate
fibrations \cite{Daverman-Hush}, there exist $W \subset \mathbb{R}^k$, $t \in W \cap T$ and a retraction $R : p^{-1}(W) \to p^{-1}(t)$
such that $W \approx \mathbb{R}^k$, and $R\arrowvert : p^{-1}(s) \to p^{-1}(t)$ is a homotopy equivalence
for any $s \in W \cap T$. Fix $x \in  W \backslash T$. By assumption and the minimality of $T$,
it suffices to show that $R\arrowvert_{\sharp}: \pi_1(p^{-1}(x)) \to  \pi_1(p^{-1}(t))$ is an isomorphism.

Using the fact that $p$ is an approximate fibration over $W\backslash T$, the
homotopy exact sequence
\[ \pi_1 \big( p^{-1}(x) \big) \cong \pi_1 (N) \stackrel{i_{\sharp}}
{\longrightarrow} \pi_1 \big( p^{-1}(W \backslash T) \big)
\stackrel{{p \arrowvert}_{\sharp}} {\longrightarrow} \pi_1 \big( W
\backslash T \big) \longrightarrow 1 \cong \pi_0(N)\]  gives
\begin{equation}\label{1a}
i_{\sharp} \big( \pi_1(N) \big)= {\rm ker} \, p_{\sharp} \unlhd \pi_1
\big( p^{-1}(W \backslash T) \big).
\end{equation}
Hence,
 \begin{equation} \label{1}
 \pi_1\left( p^{-1}(W\backslash T)\right) / i_{\sharp}(\pi_1(N)) \cong p_{\sharp}\left( p^{-1}(W\backslash T)\right) = \pi_1(W \backslash T).
 \end{equation}
In addition, by Lemma \ref{surjection} it follows that the map $j_{\sharp}: \pi_1 \big( p^{-1}(W \backslash T) \big) \rightarrow \pi_1 (p^{-1}W)$ is onto.

Next, look at the long exact homology sequence of the pair $\big( W, W \backslash
T \big)$:
\[
\begin{array}{ccccccccc}
\cdots \to &H_2 (W) &\longrightarrow &H_2\big( W,W \backslash T
\big)&
\longrightarrow& H_1 \big( W \backslash T \big) &\longrightarrow &H_1 ( W )&\to \cdots\\
&\Big\downarrow \vcenter {%
\rlap{$\scriptstyle{\cong}$}}&&&&&&\Big\downarrow \vcenter {%
\rlap{$\scriptstyle{\cong}$}}&\\
& H_2(\mathbb{R}^k)&&&&&& H_1(\mathbb{R}^k)
\end{array}
\]
Hence, $H_1(W \backslash T) \cong H_c^{k-2}(T\cap W)$ by Alexander duality \cite[p.~342]{Spanier}. Since $ H_c^{k-2}(T\cap W) \cong 0$ (${\rm
dim}(T\cap W)\le \textrm{dim } T <k-2$ and $k>2$), it follows that $H_1 (
W \backslash T) \cong 0$, hence $\pi_1( W \backslash T)$ is perfect.

Consider the following diagram:
\[
\xymatrix{
 \pi_1 (N) \cong \pi_1(p^{-1}(x)) \ar[r]^{i_{\sharp}} \ar[ddr]_{R \arrowvert_{\sharp}}& \pi_1
\big( p^{-1}(W \backslash T) \big)
\ar[r]^{p\arrowvert_{\sharp}} \ar[d]^{j_{\sharp}}& \pi_1 (W \backslash T) \ar[r]  &1\\
& \pi_1 (p^{-1}W)\ar[d]^{R_{\sharp}} &&\\
&\pi_1 (N)\cong \pi_1(p^{-1}(t))&&}\]
Since $j_{\sharp}$ and $R_{\sharp}$ are surjective, $R\arrowvert_{\sharp}(\pi_1(N)) = R_{\sharp} \circ  j_{\sharp}(i_{\sharp}(\pi_1(N))) \trianglelefteq \pi_1(N)$.
Note that the map
\[ \widetilde{R_{\sharp} \circ j_{\sharp}} : \pi_1(p^{-1}(W \backslash T)) / i_{\sharp}(\pi_1(N)) \to  \pi_1(N)/R \arrowvert_{\sharp}(\pi_1(N))\]
induced by $R_{\sharp} \circ j_{\sharp}$ is an epimorphism. Since $\pi_1(W \backslash T)$ is perfect, by (\ref{1}),
so is $\pi_1(N)/R \arrowvert_{\sharp}(\pi_1(N))$. Hence $\pi_1(N)$ being coperfectly Hopfian implies
that $R\arrowvert_{\sharp}$ is an isomorphism, which proves the theorem.
\end{proof}

Next we prove the main theorem in this section.

\begin{theorem} \label{Fib} Let $N$ be a closed orientable manifold homotopically determined by
$\pi_1$ with a coperfectly Hopfian fundamental group. If $N$ is a
codimension-2 shape m$_{simpl}$o-fibrator, then $N$ is a shape
m$_{simpl}$o-fibrator.
\end{theorem}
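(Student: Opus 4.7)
I would prove the theorem by induction on the codimension $k \geq 2$, with the base case $k=2$ given by hypothesis and Theorem~\ref{big theorem} driving the inductive step. Since being an approximate fibration is a local property on the base, and $B$ is locally Euclidean (a simplicial triangulated $k$-manifold, hence a topological manifold), I would first localize and reduce to considering $N$-shaped PL maps $p : M' \to \mathbb{R}^k$ with $M'$ an open orientable PL $(n+k)$-manifold.

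For the inductive step at codimension $k \geq 3$, I would assume $N$ is a codimension-$j$ shape m$_{\rm simpl}$o-fibrator for every $2 \leq j \leq k-1$. Fix a simplicial triangulation of $\mathbb{R}^k$ compatible with $p$, and let $T$ be its $(k-3)$-skeleton, so that $T$ is closed with $\dim T = k-3 < k-2$. I would then establish that $p|_{p^{-1}(\mathbb{R}^k \setminus T)}$ is an approximate fibration by covering $\mathbb{R}^k \setminus T$ with the open stars of simplices of dimension $\geq k-2$. Each such star has a product form $\mathbb{R}^{j} \times (\text{open cone on link})$ with $j \geq k-2$ and the link a sphere of dimension $k-j-1 \leq 1$; via a local projection onto the cone factor, $p$ can be presented as an $N$-shaped PL map to a manifold of dimension $k-j \leq 2 < k$. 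Applying the codimension-$(k-j)$ fibrator hypothesis (the base hypothesis when $k-j=2$, the inductive hypothesis otherwise), together with the openness of the continuity set (Lemma~\ref{AF1}), identifies each such local restriction as an approximate fibration. With this in hand, the hypotheses of Theorem~\ref{big theorem} are all satisfied—$N$ is homotopically determined by $\pi_1$ with perfectly-Hopfian fundamental group, $k > 2$, $\dim T < k-2$, and $p$ is an approximate fibration off $T$—so that theorem extends the approximate fibration to all of $\mathbb{R}^k$, completing the induction.

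The main obstacle is the local dimension-reduction: given the product decomposition of an open star as $\mathbb{R}^{j} \times (\text{open cone on link})$, one must display $p$, locally, as an $N$-shaped PL map to the cone factor in a form where the codimension-$(k-j)$ fibrator hypothesis genuinely applies. The most delicate subcase is a $(k-2)$-simplex, whose star is modeled on $\mathbb{R}^{k-2} \times D^2$ and which directly invokes the codimension-$2$ base hypothesis on a two-dimensional slice. Top-dimensional simplices $\sigma^k$, where the cone factor collapses to a point and no immediate reduction is available, must be handled separately via Coram and Duvall's movability characterization \cite{Coram 2} of approximate fibrations near interior points of a top cell. Once local approximate-fibration status is established over each open star, coherent assembly into a single approximate fibration on $p^{-1}(\mathbb{R}^k \setminus T)$ is a routine gluing argument.
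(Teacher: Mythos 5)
Your proposal follows essentially the same route as the paper: induct on the codimension $k$, show that $p$ is an approximate fibration over the complement of the $(k-3)$-skeleton of $B$, and then apply Theorem \ref{big theorem} (with $T$ that skeleton, of dimension $<k-2$) to conclude $p$ is an approximate fibration everywhere. The only difference is that the paper obtains the off-skeleton step by citing \cite[Lemma~8.1]{VV1} rather than reproving it, so the local open-star analysis you sketch (including the codimension-one and top-simplex cases) is exactly the content of that cited lemma.
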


\begin{proof} Suppose $N$ is a codimension-$(k-1)$ shape
${\rm m_{simpl}o}$-fibrator. Assume $p: M^{n+k} \to B$ is an $N$-shaped PL map, where $M$ is a
closed orientable PL $(n+k)$-manifold and $B$ is a triangulated
manifold. Then by \cite[Lemma~8.1]{VV1} $p$ is an approximate
fibration over $B\backslash B^{(k-3)}$, and Theorem \ref{big
theorem} implies that $p$ is an approximate fibration.

By induction on $k$, $N$ is a codimension-$k$ shape ${\rm
m_{simpl}o}$-fibrator.
\end{proof}

\noindent {\bf Remark:}  The condition of the manifold $N$ being a codimension-2 fibrator cannot be omitted. Namely, take an $n$-dimensional torus, $T$. $T$ is aspherical, so is homotopically determined by $\pi_1$. Furthermore, $\pi_1(T)$ is a finitely generated Abelian  group (so coperfectly Hopfian by Corollary \ref{Ab}). But $T$ is not a codimension-2 fibrator.

\begin{corollary} All closed orientable  surfaces $S$ with genus $g >1$ are shape m$_{simpl}$o-fibrators.
\end{corollary}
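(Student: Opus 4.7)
The plan is to verify all hypotheses of Theorem \ref{Fib} for $S$ and then apply that theorem directly. The four conditions to establish are that $S$ is (i) closed and orientable, (ii) homotopically determined by $\pi_1$, (iii) has perfectly-Hopfian fundamental group, and (iv) is a codimension-2 shape m$_{\rm simpl}$o-fibrator.

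Conditions (i) and (iii) are immediate: (i) is by assumption and (iii) is exactly the content of Theorem \ref{PH-S}. For (ii), I would recall, as noted in the discussion preceding Theorem \ref{PH-S}, that a closed orientable surface $S$ of genus $g>1$ is a $K(\pi_1(S),1)$ space and therefore aspherical; by the remarks in Section 2, every aspherical manifold is homotopically determined by $\pi_1$.

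The only ingredient that requires appeal to external theory is (iv). Here I would use two facts that the paper has essentially already recorded: first, $S$ is a codimension-2 (PL) fibrator in the classical sense, a known result for closed orientable surfaces of negative Euler characteristic (they are special manifolds by \cite[Theorem~3.2]{VV1}); second, by the remark immediately preceding Lemma \ref{AF1}, among Hopfian manifolds with Hopfian fundamental group the two classes of codimension-2 PL fibrators and codimension-2 shape m$_{\rm simpl}$-fibrators coincide. To invoke this coincidence I observe that $S$ is a Hopfian manifold (it is a closed orientable $n$-manifold with $n \le 4$ whose fundamental group is Hopfian, by the criterion recalled in Section 2) and that $\pi_1(S)$ is Hopfian (it is residually finite by \cite{He} and finitely generated, hence Hopfian by \cite{Mal}).

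With (i)--(iv) in hand, Theorem \ref{Fib} yields immediately that $S$ is a shape m$_{\rm simpl}$o-fibrator. The main (and essentially the only) substantive obstacle is condition (iv): everything else is either hypothesis, the newly proved Theorem \ref{PH-S}, or a structural remark already made in the paper. The corollary is thus a direct application of the machinery assembled in the preceding sections together with the classical codimension-2 fibrator status of hyperbolic surfaces.
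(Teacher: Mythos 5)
Your proposal is correct and follows essentially the same route as the paper: verify the hypotheses of Theorem \ref{Fib} (aspherical, hence homotopically determined by $\pi_1$; perfectly-Hopfian $\pi_1$ by Theorem \ref{PH-S}; codimension-2 shape m$_{\rm simpl}$o-fibrator) and apply it. The only difference is bookkeeping for the codimension-2 hypothesis, which the paper settles by citing \cite[Corollary~6.3]{VV1} directly, whereas you reconstruct it from the classical codimension-2 fibrator status of surfaces with negative Euler characteristic together with the remark preceding Lemma \ref{AF1}; that is an acceptable, equivalent justification.
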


\begin{proof} $S$ is homotopically determined by $\pi_1$, has coperfectly Hopfian fundamental group  by Theorem \ref{PH-S}, and is a codimension-2 shape ${\rm m_{simpl}o}$-fibrator by \cite[Corollary~ 6.3]{VV1}. So by Theorem \ref{Fib} it follows that $S$ is a shape
m$_{\rm simpl}$o-fibrator.
\end{proof}

\section{\bf Shape Fibrator's Properties of Direct Products of Hopfian Manifolds} \label{PSF}

In this section we discuss the shape m$_{\rm simpl}$o-fibrator's properties of direct product of Hopfian manifolds. First we list a lemma that follows directly from the proof of \cite[Theorem~4.1]{Daverman-Kim}. For completeness we include the proof here.

\begin{lemma} \label{D-K} (\cite[Theorem~4.1]{Daverman-Kim}) Suppose $N_1$, $N_2$ are closed orientable manifolds of dimensions $m$ and $n$ respectively, $m \ne n$, homotopicaly determined by $\pi_1$. Assume also that $N_1 \times N_2$ is a Hopfian manifold, $\pi_1(N_1)$ is normally incommensurable with $\pi_1(N_2)$, and both $\pi_1(N_1)$ and $\pi_1(N_2)$ are coperfectly Hopfain. Then $N_1 \times N_2$ is homotopically determined by $\pi_1$.
\end{lemma}

\begin{proof}
Let $\phi: N_1 \times N_2 \to N_1 \times N_2$ be a map that induces a $\pi_1$-isomorphism. For $e=1, 2$, let $i_e: N_e \to N_1 \times N_2$ be the inclusion and $pr_e:N_1 \times N_2 \to N_e$ be the projection.

First we show that $(pr_2 \circ \phi \circ i_2)_{\sharp}$ and $(pr_1 \circ \phi \circ i_1)_{\sharp}$ are isomorphisms. Using the fact that $\phi_{\sharp}$ is onto, it follows that  $(pr_2 \circ \phi \circ i_1)_{\sharp}(\pi_1(N_1)) \trianglelefteq \pi_2(N_2)$ which together with the normal incommensurability of $\pi_1(N_1)$ with $\pi_1(N_2)$, implies that $(pr_2 \circ \phi \circ i_1)_{\sharp}$ is trivial. This implies that $(pr_2 \circ \phi \circ i_2)_{\sharp}(\pi_1(N_2))=\pi_1(N_2)$ since $\phi_{\sharp}$ is onto. Now, using the hypothesis that $\pi_1(N_2)$ is Hopfian, $(pr_2 \circ \phi \circ i_2)_{\sharp}$ is an isomorphism. Since $\phi_{\sharp}$ is onto, by Lemma \ref{L1} (\ref{L1.2}) it follows that $(pr_1 \circ \phi \circ i_1)_{\sharp}(\pi_1(N_1))=\pi_1(N_1)$. Then the Hopfian property of $\pi_1(N_1)$ implies that $(pr_1 \circ \phi \circ i_1)_{\sharp}$ is an isomorphism.

Hence, $pr_e \circ \phi \circ i_e: N_e \to N_e$, $e=1, 2$, induces a $\pi_1$-isomorphism, and therefore by hypothesis is a homotopy equivalence. Choose generators, $\eta$, $\eta'$ of $H_m(N_1)$, $H_n(N_2)$, respectively. Since $pr_e \circ \phi \circ i_e$, $e=1, 2$, induces a homology isomorphism, $(pr_1 \circ \phi \circ i_1)_*(\eta)$ is a generator of $H_m(N_1)$ and $(pr_2 \circ \phi \circ i_2)_*(\eta')$ is a generator of $H_n(N_2)$. Without loss of generality we can assume that $m>n$. Then $(pr_2 \circ \phi \circ i_1)_*(\eta)=0$. An application of the K\"{u}nneth Theorem gives
\[\begin{array}{lcl}
\phi_*(\eta \otimes \eta') &=& (pr_1 \circ \phi \circ i_1)_*(\eta) \otimes (pr_2 \circ \phi \circ i_2)_*(\eta')\\
&&+ (pr_1 \circ \phi \circ i_2)_*(\eta') \otimes (pr_2 \circ \phi \circ i_1)_*(\eta)\\
&=& (pr_1 \circ \phi \circ i_1)_*(\eta) \otimes (pr_2 \circ \phi \circ i_2)_*(\eta').
\end{array}\]
Hence, $\phi_*(\eta \otimes \eta')= (pr_1 \circ \phi \circ i_1)_*(\eta) \otimes (pr_2 \circ \phi \circ i_2)_*(\eta')$ is a generator of $H_m(N_1) \otimes H_n(N_2) \cong H_{m+n}(N_1 \otimes N_2)$. Therefore, $\phi$ is a degree one map, and $N_1 \times N_2$ being Hopfian implies that $f$ is a homotopy equivalence.
\end{proof}

The following theorems are immediately seen from Theorems \ref{PH} and \ref{Fib}, and Lemma \ref{D-K}.

\begin{theorem} \label{Dir} Suppose $N_1$, $N_2$ are closed orientable manifolds of dimension $m$ and $n$ respectively, $m \ne n$,
homotopically determined by $\pi_1$. Assume also that $N_1 \times N_2$
is a Hopfian manifold. In addition, $\pi_1(N_1)$ is normally incommensurable
with $\pi_1(N_2)$ and $\pi_1(N_1)$, $\pi_1(N_2)$ are
coperfectly Hopfian.

If $N_1 \times N_2$ is a codimension-2 shape m$_{simpl}$o-fibrator, then $N_1 \times N_2$ is a shape m$_{simpl}$o-fibrator.
\end{theorem}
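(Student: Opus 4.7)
My plan is to deduce Theorem \ref{Dir} from Theorem \ref{Fib} applied to the product $N := N_1 \times N_2$. Three of the four hypotheses of Theorem \ref{Fib} are essentially built into the setup: $N$ is closed and orientable; $N$ is a codimension-$2$ shape m$_{\rm simpl}$o-fibrator by assumption; and $\pi_1(N) \cong \pi_1(N_1) \times \pi_1(N_2)$ is perfectly-Hopfian by Theorem \ref{PH}, using that each $\pi_1(N_i)$ is perfectly-Hopfian and that $\pi_1(N_1)$ is normally incommensurable with $\pi_1(N_2)$. All the real work therefore lies in showing that $N$ is homotopically determined by $\pi_1$; once that is established, Theorem \ref{Fib} delivers the conclusion.

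To verify the homotopy-determination property, let $f : N \to N$ be a self-map inducing a $\pi_1$-isomorphism. I would first transplant the argument from the proof of Theorem \ref{PH} (via Lemma \ref{L1}) to the automorphism $f_*$ of $\pi_1(N_1) \times \pi_1(N_2)$: normal incommensurability forces the off-diagonal compositions $pr_{N_j} \circ f \circ i_{N_i}$ ($i \ne j$) to be trivial on $\pi_1$, while the diagonal restrictions $f_i := pr_{N_i} \circ f \circ i_{N_i} : N_i \to N_i$ each induce $\pi_1$-isomorphisms. Since $N_i$ is homotopically determined by $\pi_1$, each $f_i$ is a homotopy equivalence, and consequently the product $F := f_1 \times f_2 : N \to N$ is a homotopy equivalence with $F_* = f_*$ and $\deg F = \pm 1$.

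To pass from $F$ to $f$ itself, I would invoke the Hopfian hypothesis on $N$: since $f$ already induces a $\pi_1$-isomorphism, it suffices to show that $\deg f = \pm 1$. I would compute this through the K\"unneth expansion
\[ f^{*}[N] = (pr_1 \circ f)^{*}[N_1] \cup (pr_2 \circ f)^{*}[N_2]. \]
The dimensional hypothesis $m \ne n$ plays its role here by breaking the symmetry between the two factors: assuming WLOG $m < n$, the K\"unneth summand $H^n(N_1) \otimes H^0(N_2)$ of $H^n(N_1 \times N_2)$ vanishes, so the off-diagonal map $h_2 := pr_2 \circ f \circ i_{N_1} : N_1 \to N_2$ cannot contribute a top ``corner'' class to $(pr_2 \circ f)^{*}[N_2]$, whereas the diagonal corner components are the pullbacks along the homotopy equivalences $f_i$ and hence equal $\pm[N_i]$. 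This asymmetry, propagated through the product, is what I would use to force $\deg f = \pm 1$. Once $\deg f = \pm 1$ is in hand, the Hopfian hypothesis gives that $f$ is a homotopy equivalence, and applying Theorem \ref{Fib} to $N$ completes the proof.

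The main obstacle I anticipate is controlling the mixed-bidegree (``interior'') K\"unneth components of $(pr_i \circ f)^{*}[N_i]$, which can in principle contribute cross-terms to the top class $[N_1] \otimes [N_2]$ and perturb the degree away from $\pm 1$. The dimensional asymmetry $m \ne n$ eliminates one family of such contributions but not all of them, and the remaining cross-terms must still be shown to cancel or vanish. I expect this to be handled by exploiting the $\pi_1$-triviality of the remaining off-diagonal map $h_1 := pr_1 \circ f \circ i_{N_2} : N_2 \to N_1$, in conjunction with the homotopy determination of $N_1$ by $\pi_1$, so that the interior components of $(pr_1 \circ f)^{*}[N_1]$ are suitably rigid.
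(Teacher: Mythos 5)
Your overall reduction is the same as the paper's: establish that $N_1 \times N_2$ is homotopically determined by $\pi_1$, get perfect-Hopficity of $\pi_1(N_1)\times\pi_1(N_2)$ from Theorem \ref{PH}, and conclude via Theorem \ref{Fib}. The difference is that the paper disposes of the first (and only nontrivial) step by invoking the proof of \cite[Theorem~4.1]{Daverman-Kim}, whereas you attempt to prove it directly --- and that attempt has a genuine gap. You acknowledge it yourself: the top-dimensional component of $(pr_1\circ f)^{*}[N_1]^{*}\cup(pr_2\circ f)^{*}[N_2]^{*}$ contains mixed-bidegree cross-terms beyond the ``corner'' term $\deg(f_1)\deg(f_2)[N]^{*}$, and you never show they vanish; ``I expect this to be handled by\dots'' is a hope, not an argument. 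Without that step you have no bound on $\deg f$, hence no way to feed $f$ into the Hopfian-manifold hypothesis, and the homotopy-determination of the product is not established.

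Moreover, the mechanism you propose for closing the gap does not work as stated, for two concrete reasons. First, the hypothesis is one-sided: $\pi_1(N_1)$ is normally incommensurable with $\pi_1(N_2)$, which (as in the proof of Theorem \ref{PH}) kills only $pr_{G_2}\circ f_{*}\circ i_{G_1}$, i.e.\ makes $h_2 = pr_2\circ f\circ i_{N_1}:N_1\to N_2$ trivial on $\pi_1$; the map you plan to exploit, $h_1 = pr_1\circ f\circ i_{N_2}:N_2\to N_1$, is \emph{not} forced to be $\pi_1$-trivial, so your statement that both off-diagonal compositions die on $\pi_1$ (and the ensuing claim $F_{*}=f_{*}$) is incorrect --- $f_{*}$ is only ``triangular'' in general. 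Second, even where $\pi_1$-triviality is available, it does not by itself control induced maps on higher (co)homology: the factors are only assumed homotopically determined by $\pi_1$, not aspherical, so a $\pi_1$-trivial map need not be null-homotopic and its interior K\"unneth components need not vanish. (Your use of $m\ne n$ also only kills the bidegree-$(n,0)$ corner of $(pr_2\circ f)^{*}[N_2]^{*}$, which never enters the top-class computation anyway, so the dimensional hypothesis is not yet doing the work it must do.) The diagonal part of your argument --- that $f_1,f_2$ induce $\pi_1$-isomorphisms via Lemma \ref{L1} and Hopficity, hence are homotopy equivalences --- is fine, but the degree argument needs to be either completed along the lines of the proof of \cite[Theorem~4.1]{Daverman-Kim} or replaced by a citation to it, as the paper does.
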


\begin{theorem} \label{C-Dir} Suppose $N_1$, $N_2$ are closed orientable aspherical manifolds. In addition, assume that $\pi_1(N_1)$ is normally incommensurable
with $\pi_1(N_2)$ and $\pi_1(N_1)$, $\pi_1(N_2)$ are
coperfectly Hopfian.

If $N_1 \times N_2$ is a codimension-2 shape m$_{simpl}$o-fibrator, then $N_1 \times N_2$ is a shape m$_{simpl}$o-fibrator.
\end{theorem}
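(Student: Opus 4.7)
The plan is to reduce Theorem \ref{C-Dir} to Theorem \ref{Fib} applied to $N = N_1 \times N_2$. Since $N_1$ and $N_2$ are closed and orientable, so is their product, so it suffices to verify the two remaining hypotheses of Theorem \ref{Fib}: that $N_1 \times N_2$ is homotopically determined by $\pi_1$, and that $\pi_1(N_1 \times N_2)$ is perfectly-Hopfian. The codimension-2 assumption is given, so these two checks close the argument.

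First I would establish that $N_1 \times N_2$ is aspherical. Since $\pi_i(N_1 \times N_2) \cong \pi_i(N_1) \times \pi_i(N_2)$ for $i \ge 2$, and both factors are aspherical by hypothesis, the higher homotopy groups of the product vanish. As recalled in Section 2, aspherical manifolds are common examples of manifolds homotopically determined by $\pi_1$, so the first hypothesis of Theorem \ref{Fib} is satisfied without further work. This observation also removes the need to appeal to \cite[Theorem~4.1]{Daverman-Kim} (which was how the previous theorem, Theorem \ref{Dir}, handled a more general situation).

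Next I would note $\pi_1(N_1 \times N_2) \cong \pi_1(N_1) \times \pi_1(N_2)$. Since both $\pi_1(N_1)$ and $\pi_1(N_2)$ are perfectly-Hopfian and $\pi_1(N_1)$ is normally incommensurable with $\pi_1(N_2)$, Theorem \ref{PH} directly yields that $\pi_1(N_1) \times \pi_1(N_2)$ is perfectly-Hopfian. With both hypotheses of Theorem \ref{Fib} in place, together with the codimension-2 assumption, we conclude that $N_1 \times N_2$ is a shape m$_{\rm simpl}$o-fibrator.

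There is no substantive obstacle here, since all the technical content has been packaged into the previously proved Theorems \ref{PH} and \ref{Fib}; the only thing to verify is that in the aspherical setting one gets the $\pi_1$-determination hypothesis for free, which is immediate from the standard fact that a self-map of aspherical CW-complexes inducing a $\pi_1$-isomorphism is a homotopy equivalence.
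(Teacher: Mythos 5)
Your proposal is correct and follows essentially the same route as the paper's own proof: the product is aspherical, hence homotopically determined by $\pi_1$, its fundamental group is perfectly-Hopfian by Theorem \ref{PH}, and Theorem \ref{Fib} then applies using the codimension-2 hypothesis. No gaps to report.
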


 \noindent{\bf Remark:} Note again the necessity of the requirement for the manifold $N_1 \times N_2$ to be a codimension-2 fibrator. Namely, take an $n$-dimensional torus $T$ and a closed orientable surface $S$ with genus $g>1$ such that $n < 2g$. They are both closed aspherical manifolds with coperfectly Hopfian fundamental groups by Corollary \ref{Ab} and Theorem \ref{PH-S}. By Theorem \ref{fg}, $\pi_1(T)$ is normally incommensurable with $\pi_1(S)$. But the manifold $T \times S$ is not a codimension-2 fibrator.

 \begin{example} Let $S_1$ and $S_2$ be two closed orientable surfaces with genuses $g_1$ and $g_2$ respectively with $g_2 > g_1>1$. Then $S_1$, $S_2$ are aspherical with coperfectly Hopfian fundamental groups  by Theorem \ref{PH-S}, and $\pi_1(S_1)$ is normally incommensurable with $\pi_1(S_2)$  by Theorem \ref{fg}. Since $S_1 \times S_2$ is a codimension-2 orientable fibrator by \cite[Main Theorem p.~9]{Yo}, $S_1 \times S_2$ is a shape m$_{\rm simpl}$o-fibrator by Theorem \ref{C-Dir}.

 \end{example}

\begin{example} Let $M^3$ be a closed orientable 3-manifold with Sol geometry that fibers over $S^1$ by \cite[Theorem~5.3]{Scott}. It is known that $M^3$ is aspherical, so homotopically determined by $\pi_1$.
Take $S$ to be a closed orientable surface with genus $g >1$. Then $M^3 \times S$ is aspherical as a product of aspherical manifolds, hence homotopically determined by $\pi_1$ and Hopfian.

 It is known that $\pi_1(M^3)$ is a finitely generated Hopfian solvable group, and is a hyper-Hopfian group by \cite[Theorem~7.2]{D.3-man}, hence coperfectly Hopfian by Theorem \ref{prop1}.   Since $\pi_1(M^3)$ is normally incommensurable with $\pi_1(S)$ by Theorem \ref{fg}, the proof of \cite[Lemma~5.1]{Daverman-Im-Kim} shows that $\pi_1(M^3\times S)$ is also hyper-Hopfian. Hence, $M^3 \times S$ is a codimension-2 fibrator by \cite[Theorem~5.4]{D.HG}.

$\pi_1(S)$ is coperfectly Hopfian by Theorem \ref{PH-S},
 hence, Theorem \ref{C-Dir} implies that $M^3 \times S$ is a shape m$_{\rm simpl}$o-fibrator.

\end{example}

\section{\bf Acknowledgment}

 The author cannot express enough thanks to Robert J. Daverman for his continued support, encouragement, and guidance during this project. The author would also like to thank Zoran $\check{\textrm{S}}$uni$\acute{\textrm{c}}$ for his valuable suggestions and Susan Hermiller for the useful discussions about the group properties. In addition, the author thanks the anonymous reviewer for the helpful suggestions and comments that benefited the paper tremendously.

\end{document}